\newtheorem{lemma}{Lemma}[section]
\newtheorem{conjecture}{Conjecture}[section]
\renewcommand{\pmod}[1]{\,(\mathrm{mod}\,#1)}
\newcommand{\acom}[1]{{\color{magenta}{\textbf{Alexandra}: #1}} }
\newcommand{\Hung}[1]{{\color{blue}{\textbf{Hung}: #1}} }
\newcommand{\kommentar}[1]{}
\begin{document}

\title[One and two level densities]{Type-I contributions to the one and two level densities of quadratic Dirichlet $L$--functions over function fields}

\author{Hung M. Bui, Alexandra Florea and J. P. Keating}
\address{Department of Mathematics, University of Manchester, Manchester M13 9PL, UK}
\email{hung.bui@manchester.ac.uk}
\address{Department of Mathematics, Columbia University, New York NY 10027, USA}
\email{aflorea@math.columbia.edu}
\address{Mathematical Institute, University of Oxford, Oxford OX2 6GG, UK}
\email{keating@maths.ox.ac.uk}

\begin{abstract}
Using the Ratios Conjecture, we write down precise formulas with lower order terms for the one and the two level densities of zeros of quadratic Dirichlet $L$--functions over function fields.  We denote the various terms arising as Type-$0$, Type-I and Type-II contributions. When the support of the Fourier transform of the test function is sufficiently restricted, we rigorously compute the Type-$0$ and Type-I terms and confirm that they match the conjectured answer.  When the restrictions on the support are relaxed, our results suggest that Type-II contributions become important in the two level density.  
\end{abstract}

\allowdisplaybreaks

\maketitle

\section{Introduction}

In this paper we compute the one and the two level densities of zeros of $L$--functions associated to quadratic characters over function fields. We compute certain Type-I contributions (as in the work of Conrey and Keating \cite{ck1, ck2, ck3, ck4, ck5}) and write down explicit conjectural Type-II terms predicted by the Ratios Conjecture \cite{cfz}. 

Understanding zeros in families of $L$--functions is a problem of considerable interest which has been much-studied. Katz and Sarnak \cite{katzsarnak, katzsarnak2} conjectured that the behavior of zeros close to the central point in a family of $L$--functions coincides with the distribution of eigenvalues near $1$ of matrices in a certain symmetry group associated to the family. There is an abundance of papers in the literature in which the above mentioned agreement is observed (for example \cite{ILS, hughesrudnick, miller, miller2}). 

When computing the $n$--level density of zeros for a particular family of $L$--functions, the Katz and Sarnak conjectures predict the main term in the asymptotic formula. Conrey, Farmer and Zirnbauer \cite{cfz} conjectured formulas for averages of ratios of $L$--functions, and using the Ratios Conjecture, one can write down an explicit formula for the $n$--level density which recovers the Katz-Sarnak main term and further include lower order terms \cite{CS}.  In the case of the Riemann zeta-function, the resulting expressions coincide with formulas obtained earlier by Bogomolny and Keating using the Hardy-Littlewood twin-prime conjecture \cite{BK1} (see also \cite{BeKe, BK4, BK5}).

A related problem is that of computing moments in families of $L$--functions. Using analogies with random matrix theory, Keating and Snaith \cite{ks2, ksnaith} conjectured asymptotic formulas with the leading order term for moments in various families. A more refined conjecture, due to Conrey, Farmer, Keating, Rubinstein and Snaith \cite{cfkrs}, and similar in nature to the Ratios Conjecture \cite{cfz}, predicts lower order terms undetected by the random matrix models. More recent work of Conrey and Keating \cite{ck1, ck2,ck3,ck4,ck5} revisits the question of evaluating shifted moments of the Riemann zeta-function from a different perspective, and recovers the lower order terms predicted in \cite{cfkrs}. Conrey and Keating used long Dirichlet polynomials rather than the approximate functional equation, and divide the terms that arise into certain Type-$0$, Type-I and Type-II contributions (depending on the number of swaps in the shifts). This builds on previous work in the case of the $n$-point correlation of the zeros by Bogomolny and Keating \cite{BK2, BK3}, where a similar division was first introduced (see also \cite{ck6, ck7}).  Here we use the same ideas to examine asymptotic formulas including lower order terms for the $n$ level density of zeros. Throughout our paper, we use the Conrey and Keating nomenclature for Type-$0$, Type-I and Type-II terms. 

For the family of quadratic Dirichlet $L$--functions, \"{O}zl\"{u}k and Snyder \cite{ozluk} computed the one level density of zeros when the support of the Fourier transform of the test function is in $(-2,2)$. The higher densities in this family of $L$--functions were studied by Rubinstein \cite{rubinstein}. For a Schwartz test function $f \in \mathcal{S}(\mathbb{R}^n)$, even in all the variables, Rubinstein computed the $n$--level density when the Fourier transform of $f$ is supported in $\sum_{j=1}^n |u_j|<1$, conditional on the Generalized Riemann Hypothesis. Gao \cite{gao} attempted to double the range in Rubinstein's result. More specifically, he showed that if $f$ is of the form $f(x_1,\ldots,x_n)= \prod_{i=1}^n f_i(x_i)$ and each $\hat{f_i}$ is supported in $|u_i|<s_i$ and $\sum_{i=1}^n s_i<2$, then the $n$--level density of zeros is equal to a complicated combinatorial factor $A(f)$. For $n=2,3$, he showed that $A(f)$ agrees with the Katz and Sarnak conjecture. Recent work of Entin, Roditty-Gershon and Rudnick \cite{entin} showed that indeed the combinatorial factor $A(f)$ obtained by Gao matches the random matrix theory prediction for all $n$. Their novel approach does not involve doing the combinatorics directly, but passing to a function field analog of the problem, taking the limit $q \to \infty$ and using equidistribution results of Katz and Sarnak.  An alternative approach was developed in \cite{CS2, MS}. 

In the function field setting, Rudnick \cite{R} computed the one level density of zeros for the family of quadratic Dirichlet $L$--functions and showed that there is a transition when the support of the Fourier transform goes beyond $1$. Bui and Florea \cite{bf} obtained infinitely many lower order terms when the support of the Fourier transform is in certain ranges, and further computed the pair correlation of zeros in the family. 

In the present paper, we consider the two level density of zeros in the family of quadratic Dirichlet $L$--functions. Let $\mathcal{H}_{2g+1}$ denote the space of monic, square-free polynomials of degree $2g+1$ over $\mathbb{F}_q[x]$. For simplicity, in the definition of the two level density, we take the test function to be equal to $1$. The two level density of zeros is defined to be
\begin{equation}
I_2(N;\alpha,\beta)=\frac{1}{|\mathcal{H}_{2g+1}|}\sum_{D\in\mathcal{H}_{2g+1}}\sum_{\substack{f_1,f_2\in\mathcal{M}\\d(f_1f_2)\leq N}}\frac{\Lambda(f_1)\Lambda(f_2)\chi_D(f_1f_2)}{|f|_1^{1/2+\alpha}|f|_2^{1/2+\beta}}, \label{i2}
\end{equation}
where $\Lambda(f)$ denotes the von Mangoldt function over function fields, and $\chi_D(f)$ is the quadratic character.

 Using the Ratios Conjecture over function fields \cite{AK}, we write down precise formulas for the two level density in terms of Type-$0$, Type-I and Type-II contributions. The Type-I terms kick in when $N \geq 2g$ and Type-II terms appear when $N \geq 4g$. We  compute the Type-$0$ and Type-I terms rigorously by estimating sums over primes (i.e.~over monic irreducible polynomials). Our approach in computing the two level density is more direct than the one used by Entin, Roditty-Gershon and Rudnick \cite{entin}, and we do not take $q \to \infty$ (hence we do not use any equidistribution results). The Type-$0$ terms, or the so-called "diagonal", come from prime powers $f_1$ and $f_2$ in \eqref{i2} with the product $f_1 f_2$ being a square. The diagonal terms are relatively straightforward to compute. Evaluating the Type-I terms is more subtle and requires more involved computations. We use the Poisson summation formula for the sum over $D$ (after removing the squarefree condition) and then we compute the contribution from the parameter on the dual side of the Poisson summation formula being a square. We sum up these contributions and then we check that they match the answer conjectured from the Ratios Conjecture. 
 
 Type-I terms essentially come from squares on the dual side of the Poisson summation formula over function fields. Our methods do not allow us to identify the Type-II terms which only arise when $N \geq 4g$, but we explicitly write down the conjectured Type-II contribution.  This is one of our main goals: to draw attention to the fact that when the methods that have been employed successfully for many years in calculations of the one level density are applied to the two level density they fail to capture all of the terms, underlining the importance of developing methods to compute the Type-II terms in this case.
 
For the sake of completeness, we also include the computation of the one level density (with a shift) and match the terms we obtain with the Type-$0$ and Type-I contributions. 
 
 \subsection{Outline of the paper} 
In Section \ref{background} we gather a few useful lemmas we will need. In Section \ref{1rc} we use the Ratios Conjecture to write down formulas for the one level density of zeros with Type-$0$ and Type-I terms (there are no Type-II terms for the one level density). We rigorously compute these terms when $N<4g$ and match them to the conjecture in Section \ref{1compute}. In Section \ref{2rc} we again use the Ratios Conjecture to predict the Type-$0$, Type-I and Type-II contributions for the two level density. The diagonal terms are computed in Section \ref{2diag} and Type-I terms in Section \ref{type1}. In subsection \ref{combine} we combine the various contributions from Sections \ref{type11} and \ref{type12} and show that they agree with the conjecture.

  \medskip
{\bf Acknowledgements.} A. Florea gratefully acknowledges the support of an NSF Postdoctoral Fellowship during part of the research which led to this paper. 
J.P. Keating was supported by a Royal Society Wolfson Research Merit Award, EPSRC Programme Grant EP/K034383/1 LMF: $L$-Functions and Modular Forms, and by ERC Advanced Grant 740900 (LogCorRM).
 The authors would also like to thank Julio Andrade, Brian Conrey, Chantal David, Steve Gonek and Matilde Lal\'{i}n for many stimulating discussions and useful comments during SQuaRE meetings at AIM.
 
\section{Lemmas}
\label{background}
Let $q \equiv 1 \pmod 4$ be a prime. We denote the set of monic polynomials over $\mathbb{F}_q[x]$ by $\mathcal{M}$. Let $\mathcal{M}_n$ denote the set of monic polynomials of degree $n$, $\mathcal{H}_n$ the set of monic, squarefree polynomials of degree $n$, and $\mathcal{P}_n$ the monic, irreducible polynomials of degree $n$. The set of monic polynomials of degree less than or equal to $n$ is denoted by $\mathcal{M}_{\leq n}$. For simplicity, we denote the degree of a polynomial $f$ by $d(f)$. 
The norm of a polynomial $f$ is defined by $|f|= q^{d(f)}$. 

 The zeta-function over $\mathbb{F}_q[x]$ is defined by 
$$\zeta_q(s) = \sum_{f\in\mathcal{M}} \frac{1}{|f|^s}$$ for $\Re(s)>1$. Since there are $q^n$ monic polynomials of degree $n$, one can easily show that
$$\zeta_q(s) = \frac{1}{1-q^{1-s}},$$ and this provides a meromorphic continuation of $\zeta_q$ with a simple pole at $s=1$. Making the change of variables $u=q^{-s}$, the zeta-function becomes
$$ \mathcal{Z}(u) = \zeta_q(s) = \sum_{f \in\mathcal{M}} u^{d(f)} = \frac{1}{1-qu},$$ which has a simple pole at $u=1/q$. Note that $\mathcal{Z}(u)$ is given by the Euler product
$$ \mathcal{Z}(u) = \prod_P \Big(1-u^{d(P)}\Big)^{-1},$$ for $|u|<1/q$, where the product is over monic, irreducible polynomials in $\mathbb{F}_q[t]$.

The quadratic character over $\mathbb{F}_q[t]$ is defined as follows. For $P$ a monic, irreducible polynomial let
$$ \Big( \frac{f}{P} \Big)= 
\begin{cases}
1 & \mbox{ if } P \nmid f, f \text{ is a square modulo }P, \\
-1 & \mbox{ if } P \nmid f, f \text{ is not a square modulo }P, \\
0 & \mbox{ if } P|f.
\end{cases}
$$
We extend the definition of the quadratic residue symbol above to any monic $D \in \mathbb{F}_q[t]$ by multiplicativity, and define the quadratic character $\chi_D$ by
$$\chi_D(f) = \Big( \frac{D}{f} \Big).$$
Since we assumed that $q \equiv 1 \pmod 4$, note that the quadratic reciprocity law takes the following form: if $A$ and $B$ are two monic coprime polynomials, then 
$$ \Big( \frac{A}{B} \Big) = \Big( \frac{B}{A} \Big).$$
We define the von Mangoldt function to be
$$\Lambda(f) = 
\begin{cases}
d(P) & \mbox{ if } f=cP^k, c \in \mathbb{F}_q^{\times}, \\
0 & \mbox{ otherwise.}
\end{cases}
$$
The following lemma expresses sums over squarefree polynomials in terms of sums over monics.

\begin{lemma}\label{L1}
For $f\in\mathcal{M}$ we have
\[
\sum_{D\in\mathcal{H}_{2g+1}}\chi_D(f)=\sum_{C|f^\infty}\sum_{h\in\mathcal{M}_{2g+1-2d(C)}}\chi_f(h)-q\sum_{C|f^\infty}\sum_{h\in\mathcal{M}_{2g-1-2d(C)}}\chi_f(h),
\]
where the summations over $C$ are over monic polynomials $C$ whose prime factors are among the prime factors of $f$.
\end{lemma}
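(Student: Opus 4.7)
\medskip
\noindent\textbf{Proof plan.} My plan is to prove the identity by passing to generating functions in the variable $u$ and then extracting the coefficient of $u^{2g+1}$. Define
\[
S(u) \;=\; \sum_{D \in \mathcal{M},\, D \text{ squarefree}} \chi_D(f) \, u^{d(D)} \;=\; \sum_{n \ge 0} \Bigl(\,\sum_{D\in\mathcal{H}_n} \chi_D(f)\Bigr) u^n.
\]
The claim amounts to computing the coefficient $[u^{2g+1}] S(u)$, so it suffices to derive a closed-form expression for $S(u)$ and then read off this coefficient.

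Since the indicator of squarefreeness factors over primes, $S(u)$ has the Euler product
\[
S(u) \;=\; \prod_P \bigl(1 + \chi_P(f)\, u^{d(P)}\bigr).
\]
For primes $P \mid f$ we have $\chi_P(f) = 0$, so these factors equal $1$. For $P \nmid f$, the hypothesis $q \equiv 1 \pmod 4$ gives quadratic reciprocity $\chi_P(f) = \chi_f(P)$, and $\chi_f(P)^2 = 1$. Using the algebraic identity $1 + x = (1-x^2)/(1-x)$ I can rewrite each factor as
\[
1 + \chi_f(P)\, u^{d(P)} \;=\; \frac{1 - u^{2d(P)}}{1 - \chi_f(P)\, u^{d(P)}},
\]
which yields
\[
S(u) \;=\; \frac{\displaystyle\prod_{P \nmid f} \bigl(1 - u^{2d(P)}\bigr)}{\displaystyle\prod_{P \nmid f} \bigl(1 - \chi_f(P)\, u^{d(P)}\bigr)}.
\]

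Next I would identify the two products in standard $L$-function form. The denominator, together with the trivial factors $1$ coming from $P \mid f$ (where $\chi_f(P)=0$), is exactly $L(u,\chi_f) := \sum_{h\in\mathcal{M}} \chi_f(h)\, u^{d(h)}$. For the numerator, I split off the primes dividing $f$ to obtain
\[
\prod_{P \nmid f}\bigl(1 - u^{2d(P)}\bigr) \;=\; \frac{1}{\mathcal{Z}(u^2)} \cdot \prod_{P \mid f}\bigl(1 - u^{2d(P)}\bigr)^{-1} \;=\; (1-qu^2)\sum_{C \mid f^\infty} u^{2d(C)},
\]
using $\mathcal{Z}(u^2) = (1-qu^2)^{-1}$ and the standard expansion of the remaining Euler factor as a sum over monics $C$ whose prime divisors all divide $f$. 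Combining these pieces,
\[
S(u) \;=\; L(u,\chi_f)\,(1-qu^2) \sum_{C \mid f^\infty} u^{2d(C)}.
\]

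Finally I would extract the coefficient of $u^{2g+1}$. Since $L(u,\chi_f) = \sum_h \chi_f(h)\, u^{d(h)}$, the product $L(u,\chi_f) \sum_{C\mid f^\infty} u^{2d(C)}$ contributes, at a given power $u^N$, precisely $\sum_{C\mid f^\infty} \sum_{h\in\mathcal{M}_{N-2d(C)}} \chi_f(h)$. The factor $(1-qu^2)$ then produces the difference of two such sums, with $N=2g+1$ and $N=2g-1$ respectively, the latter carrying the coefficient $-q$. This matches the right-hand side of the lemma. The main bookkeeping is keeping track of which factors run over $P \nmid f$ versus $P \mid f$ in the Euler product manipulation, and ensuring the reciprocity step is justified; no deeper obstacle arises.
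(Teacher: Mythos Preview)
Your argument is correct. The paper itself does not give a proof but simply cites Lemma~2.2 of \cite{aflorea}; your generating-function derivation via the Euler product
\[
\sum_{D\ \mathrm{squarefree}}\chi_D(f)\,u^{d(D)}=L(u,\chi_f)\,(1-qu^2)\sum_{C\mid f^\infty}u^{2d(C)}
\]
and subsequent coefficient extraction is the standard route to this identity and matches what one finds in that reference. The only points worth tightening in a final write-up are routine: note that all series converge absolutely for $|u|<q^{-1}$ (or work formally), and that $\chi_D(f)=\chi_f(D)$ holds for all monic $D,f$ since both sides vanish whenever $(D,f)\ne 1$ and agree by reciprocity otherwise.
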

\begin{proof}
See Lemma $2.2$ in \cite{aflorea}. 
\end{proof}

We define the generalized Gauss sum as follows. For $f \in \mathcal{M}$, let
\[
G(V,f):= \sum_{u \pmod f} \chi_f(u)e\Big(\frac{uV}{f}\Big),
\]
where the exponential over function fields was defined in \cite{hayes}. Specifically, for $a \in \mathbb{F}_q((1/t))$, 
$$e(a) = e^{2 \pi i a_1/q},$$ where $a= \ldots +a_1/t+ \ldots$.

The following two lemmas are Proposition 3.1 and Lemma 3.2 in \cite{aflorea}.

\begin{lemma}\label{L2}
Let $f\in\mathcal{M}_n$. If $n$ is even then
\[
\sum_{h\in\mathcal{M}_m}\chi_f(h)=\frac{q^m}{|f|}\bigg(G(0,f)+q\sum_{V\in\mathcal{M}_{\leq n-m-2}}G(V,f)-\sum_{V\in\mathcal{M}_{\leq n-m-1}}G(V,f)\bigg),
\]
otherwise
\[
\sum_{h\in\mathcal{M}_m}\chi_f(h)= \frac{q^{m+1/2}} {|f|}\sum_{V\in\mathcal{M}_{n-m-1}}G(V,f).
\]
\end{lemma}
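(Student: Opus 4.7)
The plan is to use Fourier inversion on the additive group $\mathbb{F}_q[t]/f$ to write $\chi_f(h)$ in terms of Gauss sums, swap the order of summation, and then evaluate the resulting inner exponential sum via a careful Laurent expansion at infinity. Orthogonality of additive characters modulo $f$ applied to the definition of $G(V,f)$ yields the inversion
\[
\chi_f(h)=\frac{1}{|f|}\sum_{V\pmod f}G(V,f)\,e\Big(-\frac{Vh}{f}\Big),
\]
so interchanging summations reduces the lemma to computing $S_m(V,f):=\sum_{h\in\mathcal{M}_m}e(-Vh/f)$ for every $V$ in a complete set of residues modulo $f$.

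To evaluate $S_m(V,f)$, I would write $h=t^m+h'$ with $d(h')<m$, factor the exponential as $e(-Vt^m/f)\cdot e(-Vh'/f)$, and expand $V/f=\sum_{j\geq n-d(V)}a_j t^{-j}$ (so $a_{n-d(V)}=\mathrm{lc}(V)$). Extracting the coefficient of $1/t$ in $Vh'/f$ shows that the character $h'\mapsto e(-Vh'/f)$ on $\{h':d(h')<m\}$ is trivial precisely when $a_1=\cdots=a_m=0$, i.e.~when $d(V)\leq n-m-1$; the prefactor $e(-Vt^m/f)$ equals $e_q(-a_{m+1})$, where $e_q(x):=e^{2\pi ix/q}$. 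This yields
\[
S_m(V,f)=\begin{cases} q^m, & V=0 \text{ or } d(V)\leq n-m-2,\\ q^m\,e_q(-\mathrm{lc}(V)), & d(V)=n-m-1,\\ 0, & d(V)\geq n-m.\end{cases}
\]

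I would then parametrise each nonzero $V$ as $V=cV_m$ with $c\in\mathbb{F}_q^\times$ and $V_m$ monic, and use $G(cV_m,f)=\chi_f(c)G(V_m,f)$, obtained from the substitution $u\mapsto c^{-1}u$ in the Gauss sum together with the quadraticity of $\chi_f$. A short computation via $\chi_P(c)=c^{(|P|-1)/2}$ and multiplicativity shows that $\chi_f(c)=1$ for every $c\in\mathbb{F}_q^\times$ when $n=d(f)$ is even, whereas $\chi_f(c)=\chi(c)$, the quadratic character of $\mathbb{F}_q$, when $n$ is odd. Consequently
\[
\sum_{c\in\mathbb{F}_q^\times}\chi_f(c)=\begin{cases}q-1, & n \text{ even},\\ 0, & n \text{ odd},\end{cases}\qquad \sum_{c\in\mathbb{F}_q^\times}\chi_f(c)\,e_q(-c)=\begin{cases}-1, & n \text{ even},\\ q^{1/2}, & n \text{ odd},\end{cases}
\]
the odd case of the second identity being the classical quadratic Gauss sum, which evaluates to $+\sqrt{q}$ thanks to $q\equiv 1\pmod 4$ and $\chi(-1)=1$.

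Assembling, for $n$ even the total becomes $\frac{q^m}{|f|}\big(G(0,f)+(q-1)\sum_{V\in\mathcal{M}_{\leq n-m-2}}G(V,f)-\sum_{V\in\mathcal{M}_{n-m-1}}G(V,f)\big)$, and rewriting $\mathcal{M}_{\leq n-m-1}=\mathcal{M}_{\leq n-m-2}\sqcup\mathcal{M}_{n-m-1}$ recovers the stated formula. For $n$ odd, $d(f)$ odd forces $f$ not to be a square so $G(0,f)=0$, and the contribution from $d(V)\leq n-m-2$ vanishes because $\sum_c\chi_f(c)=0$, leaving only $\frac{q^{m+1/2}}{|f|}\sum_{V\in\mathcal{M}_{n-m-1}}G(V,f)$, as claimed. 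The main obstacle is the case analysis of $S_m(V,f)$ via the Laurent expansion of $V/f$---in particular pinpointing both the cutoff degree $n-m-1$ and the factor $e_q(-\mathrm{lc}(V))$; the remaining steps are formal Gauss-sum manipulations.
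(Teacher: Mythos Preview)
Your argument is correct. The paper does not actually prove this lemma but cites it as Proposition~3.1 of \cite{aflorea}; your Fourier-inversion approach---writing $\chi_f(h)$ via orthogonality in terms of $G(V,f)$, evaluating the inner exponential sum $S_m(V,f)$ by reading off the coefficient of $1/t$ in the Laurent expansion of $Vh/f$, and then collapsing non-monic $V$ to monic ones via $G(cV_m,f)=\chi_f(c)G(V_m,f)$ together with the evaluation of $\sum_{c\in\mathbb{F}_q^\times}\chi_f(c)e_q(-c)$---is exactly the standard derivation and coincides with the argument in that reference. One minor point: your line ``$d(f)$ odd forces $f$ not to be a square so $G(0,f)=0$'' is correct but tacitly uses that $\chi_f$ is a nontrivial character on $(\mathbb{F}_q[t]/f)^\times$ whenever $f$ is not a perfect square; it would be worth making that step explicit.
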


\begin{lemma}\label{L3}
\begin{enumerate}
\item If $(f,h)=1$, then $G(V, fh)= G(V, f) G(V,h)$.
\item Write $V= V_1 P^{\alpha}$ where $P \nmid V_1$.
Then 
 $$G(V , P^j)= 
\begin{cases}
0 & \mbox{if } j \leq \alpha \text{ and } j \text{ odd,} \\
\varphi(P^j) & \mbox{if }  j \leq \alpha \text{ and } j \text{ even,} \\
-|P|^{j-1} & \mbox{if }  j= \alpha+1 \text{ and } j \text{ even,} \\
\chi_P(V_1) |P|^{j-1/2} & \mbox{if } j = \alpha+1 \text{ and } j \text{ odd, } \\
0 & \mbox{if } j \geq 2+ \alpha .
\end{cases}$$ 
\end{enumerate}
\end{lemma}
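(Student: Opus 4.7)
My plan is to treat the two parts separately, both by direct manipulation of the definition $G(V,f) = \sum_{u \bmod f} \chi_f(u) e(uV/f)$.

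For part (1), I would parameterize $u \bmod fh$ by pairs $(u_1,u_2)$ with $u_1 \bmod f$ and $u_2 \bmod h$ via the Chinese Remainder Theorem, writing $u \equiv u_1 h \bar h + u_2 f \bar f \pmod{fh}$, where $\bar h$ and $\bar f$ denote the inverses of $h$ mod $f$ and of $f$ mod $h$. The character factors as $\chi_{fh}(u) = \chi_f(u_1)\chi_h(u_2)$ and the exponential splits as $e(u_1 V \bar h/f) e(u_2 V \bar f/h)$, so the sum decomposes as a product. A linear substitution $w_1 = u_1 \bar h \bmod f$ and $w_2 = u_2 \bar f \bmod h$ then produces $\chi_f(h) G(V,f) \cdot \chi_h(f) G(V,h)$. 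Quadratic reciprocity over $\mathbb{F}_q[t]$—which takes the clean form $\chi_f(h) = \chi_h(f)$ because $q \equiv 1 \pmod 4$—gives $\chi_f(h)\chi_h(f) = 1$ and yields multiplicativity.

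For part (2), write $V = V_1 P^\alpha$ with $P \nmid V_1$ and split into the cases $j \leq \alpha$, $j = \alpha+1$, and $j \geq \alpha+2$, according to where the non-integer part of $uV/P^j$ sits. Using that $\chi_{P^j}(u) = \chi_P(u)^j$ when $P \nmid u$ and vanishes otherwise, case (a) reduces to $\sum_{P \nmid u}\chi_P(u)^j$ because the exponential disappears; this equals $\varphi(P^j)$ for $j$ even and zero for $j$ odd by orthogonality of $\chi_P$ as a character on $(\mathbb{F}_q[t]/P)^\times$. Case (c) is handled by fibering $u = u_0 + P^{j-\alpha} u_1$, after which the $u_1$ variable runs over a complete residue system modulo a positive power of $P$ against the non-trivial additive character $e(\,\cdot\, V_1/P^{j-\alpha-1})$, forcing the sum to vanish. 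The only delicate case is $j = \alpha+1$: writing $u = u_0 + P u_1$, the $u_1$-sum contributes a clean factor $|P|^{j-1}$, while the $u_0$-sum becomes a complete sum modulo $P$. For $j$ even this is a pure additive character sum whose restriction to $P \nmid u_0$ equals $-1$, and for $j$ odd it equals $\chi_P(V_1) G(1,P)$ after a substitution $u_0 \mapsto u_0 V_1^{-1}$.

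The main obstacle is the evaluation of the quadratic Gauss sum $G(1,P) = |P|^{1/2}$ that surfaces in the odd half of case (b); this is the function field analog of the classical result that the Gauss sum of the Legendre symbol has absolute value $\sqrt{p}$, and pinning down both the magnitude and the sign uses the construction of the additive character $e(\cdot)$ from \cite{hayes} together with the hypothesis $q \equiv 1 \pmod 4$. Once this input is in hand, the remainder of part (2) is bookkeeping with orthogonality of additive and multiplicative characters, and the entire lemma is worked out explicitly in \cite{aflorea}.
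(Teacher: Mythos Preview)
The paper does not prove this lemma at all; it simply records that Lemmas~\ref{L2} and~\ref{L3} are Proposition~3.1 and Lemma~3.2 of~\cite{aflorea}. Your sketch is the standard direct computation underlying that reference and is essentially correct.

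One small slip to flag in case (c): with the fibering $u = u_0 + P^{j-\alpha}u_1$ as you wrote it, both $\chi_{P^j}(u)=\chi_P(u_0)^j$ and $e(uV_1/P^{j-\alpha}) = e(u_0 V_1/P^{j-\alpha})\,e(u_1V_1)=e(u_0 V_1/P^{j-\alpha})$ are constant in $u_1$, so the $u_1$-sum just contributes a harmless factor $|P|^{\alpha}$ and nothing vanishes. The fibering that does the job is $u = u_0 + P\,u_1$ with $u_0 \bmod P$ and $u_1 \bmod P^{j-1}$; then the $u_1$-sum is against the nontrivial additive character $u_1 \mapsto e(u_1 V_1/P^{\,j-\alpha-1})$, which vanishes by orthogonality precisely when $j\geq \alpha+2$. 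With this correction your argument goes through verbatim.
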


The following lemmas are the equivalent of the Polya-Vinogradov inequality and the Weil bound in function fields.
\begin{lemma} 
\label{pv}
We have
$$ \sum_{D \in \mathcal{H}_{2g+1}} \chi_D(P) \ll |P|^{1/2},$$
and for $Q$ a prime polynomial,
$$ \sum_{\substack{D \in \mathcal{H}_{2g+1} \\ (D,Q)=1}} \chi_D(P) \ll \frac{g}{d(Q)}|P|^{1/2} .$$
\end{lemma}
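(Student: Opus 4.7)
The plan is to reduce both bounds to an analysis of the Dirichlet $L$-function $L(u, \chi_P) = \sum_f \chi_P(f) u^{d(f)}$ via Lemma \ref{L1}, and then invoke the Riemann hypothesis over function fields (Weil's theorem). For the first bound, I would apply Lemma \ref{L1} with $f = P$; since $P$ is prime the $C$-sum collapses to one over $C = P^j$ with $j \geq 0$, yielding
\[
\sum_{D \in \mathcal{H}_{2g+1}} \chi_D(P) = \sum_{j \geq 0}\bigl(a_{2g+1-2jd(P)} - q\, a_{2g-1-2jd(P)}\bigr),
\]
where $a_m := \sum_{h \in \mathcal{M}_m} \chi_P(h) = [u^m] L(u, \chi_P)$. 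By Weil, $L(u, \chi_P)$ is a polynomial in $u$ of degree $d(P) - 1$ whose inverse roots all have modulus $q^{1/2}$; in particular $a_m = 0$ for $m \geq d(P)$, and for smaller $m$ one has $|a_m| \ll |P|^{1/2}$ (e.g.\ via $\sum_m |a_m| \leq (1 + q^{1/2})^{d(P)-1}$). Since the constraints $0 \leq 2g+1 - 2jd(P) < d(P)$ and $0 \leq 2g-1 - 2jd(P) < d(P)$ each admit $O(1)$ values of $j$, the first bound follows.

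For the second bound I would use inclusion-exclusion to handle the coprimality: since $Q$ is prime, $\mathbf{1}_{(D,Q)=1} = \sum_{E \mid \gcd(D,Q)} \mu(E)$ reduces to $E \in \{1, Q\}$. Writing $D = QR$ in the $E = Q$ contribution (with $R \in \mathcal{H}_{2g+1 - d(Q)}$ coprime to $Q$) and iterating the reduction yields
\[
\sum_{\substack{D \in \mathcal{H}_{2g+1} \\ (D, Q) = 1}} \chi_D(P) = \sum_{k \geq 0} (-\chi_P(Q))^k \sum_{D' \in \mathcal{H}_{2g+1 - k d(Q)}} \chi_{D'}(P).
\]
Each inner sum is $\ll |P|^{1/2}$ by the first bound (the argument applies at any degree, not just $2g+1$), and there are $O(g/d(Q))$ admissible values of $k$, which gives the claimed bound.

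The hard part will be producing the sharp bound $|a_m| \ll |P|^{1/2}$ on the individual coefficients of $L(u, \chi_P)$: the elementary Vieta estimate $|a_m| \leq \binom{d(P)-1}{m} q^{m/2}$ can exceed $q^{d(P)/2}$ when $m$ is near $d(P)/2$ and $q$ is small, so one must use an aggregate Polya--Vinogradov type bound (e.g.\ via $\sum_m |a_m| \leq (1 + q^{1/2})^{d(P)-1}$ or contour integration on a suitable circle) to extract the $|P|^{1/2}$ estimate uniformly.
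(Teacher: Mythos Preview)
Your overall strategy is correct and matches the standard argument (the paper itself only cites \cite{bf} here). The reduction of the first sum via Lemma~\ref{L1}, and the inclusion--exclusion iteration for the second, both work as you describe.

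The one genuine gap is precisely where you flag it. Neither of your two proposed devices yields $|a_m|\ll|P|^{1/2}$ uniformly in $m$: with $n=d(P)$, the bound $\sum_m|a_m|\le(1+q^{1/2})^{n-1}$ is correct but too weak, since
\[
\frac{(1+q^{1/2})^{\,n-1}}{q^{n/2}}=\frac{(1+q^{-1/2})^{\,n-1}}{q^{1/2}}\longrightarrow\infty\quad\text{as }n\to\infty;
\]
and contour integration on a single circle recovers at best the same Vieta-type estimate $|a_m|\le\binom{n-1}{m}q^{m/2}$, which fails for the same reason when $m$ is near $n/2$. The missing input is the functional equation for $L(u,\chi_P)$. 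In the odd case it reads
\[
L(u,\chi_P)=\epsilon(\chi_P)\,(\sqrt{q}\,u)^{\,n-1}\,L\!\left(\tfrac{1}{qu},\chi_P\right),\qquad |\epsilon(\chi_P)|=1,
\]
and matching coefficients gives $|a_m|=q^{\,m-(n-1)/2}\,|a_{\,n-1-m}|$. For $m\le(n-1)/2$ the trivial bound $|a_m|\le q^m$ already yields $|a_m|\le q^{(n-1)/2}$; for $m>(n-1)/2$ the functional equation combined with the trivial bound on $a_{\,n-1-m}$ gives the same. (In the even case one first splits off the factor $1-u$ and argues identically for the degree-$(n-2)$ quotient.) Hence $|a_m|\ll|P|^{1/2}$ for every $m$. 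With this repair both parts of your argument go through.
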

\begin{proof}
See, for example, Lemma $3.5$ and p. $8033$ in \cite{bf}.
\end{proof}

\begin{lemma}[The Weil bound]\label{sumprimes}
For $V\in\mathcal{M}$ not a perfect square we have
$$\sum_{P \in \mathcal{P}_n} \chi_V(P) \ll \frac{d(V)}{n} q^{n/2}.$$
\end{lemma}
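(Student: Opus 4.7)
The plan is to deduce this bound from the Riemann Hypothesis for curves over $\mathbb{F}_q$, in its $L$-function form, combined with the standard trick of passing between $\Lambda$ and the prime counting sum. Since $V$ is not a perfect square, $\chi_V$ is a non-trivial quadratic character, and the associated Dirichlet $L$-function
\[
L(u,\chi_V) \;=\; \sum_{f\in\mathcal{M}} \chi_V(f)\,u^{d(f)} \;=\; \prod_{P} \bigl(1 - \chi_V(P)\,u^{d(P)}\bigr)^{-1}
\]
is a polynomial in $u$ of degree at most $d(V)$; factor it as $L(u,\chi_V) = \prod_{j}(1-\alpha_j u)$. Weil's theorem then gives $|\alpha_j|\le\sqrt{q}$ for every inverse root $\alpha_j$.

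Next, I would take the logarithmic derivative of $L(u,\chi_V)$ and read off the coefficient of $u^n$ to obtain the explicit formula
\[
\psi_n(\chi_V) \;:=\; \sum_{f\in\mathcal{M}_n}\Lambda(f)\chi_V(f) \;=\; -\sum_{j}\alpha_j^{\,n},
\]
so that $|\psi_n(\chi_V)|\le d(V)\,q^{n/2}$. Then I would separate primes from proper prime powers by grouping $f=P^{n/d}$ for $d\mid n$:
\[
\psi_n(\chi_V) \;=\; \sum_{d\mid n} d\sum_{P\in\mathcal{P}_d}\chi_V(P)^{n/d} \;=\; n\sum_{P\in\mathcal{P}_n}\chi_V(P) \;+\; R_n.
\]
The largest contribution to $R_n$ comes from $d=n/2$ (when $n$ is even), which is bounded by $\tfrac{n}{2}|\mathcal{P}_{n/2}| \le q^{n/2}$ using the trivial estimate $d\,|\mathcal{P}_d|\le q^d$, while the remaining divisors of $n$ contribute at most $O(q^{n/3})$. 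Thus $R_n\ll q^{n/2}$, and dividing through by $n$ yields the claimed bound $\ll\tfrac{d(V)}{n}q^{n/2}$ (the $q^{n/2}/n$ from $R_n$ being absorbed into $\tfrac{d(V)}{n}q^{n/2}$ since $d(V)\ge 1$).

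The only step that carries genuine arithmetic content is the control of $L(u,\chi_V)$ as a polynomial whose inverse roots lie in the disk of radius $\sqrt{q}$; this is Weil's theorem. A minor bookkeeping point is that when the squarefree kernel of $V$ is a proper divisor of $V$, the full Euler product differs from the primitive $L$-function by trivial factors $(1-u^{d(P)})$ at the extra primes dividing $V$, whose inverse roots lie on the unit circle and are still inside the Weil disk; this inflates the degree bound but only up to $O(d(V))$, which is exactly what is needed. Everything beyond this step is routine power-series manipulation, so I do not anticipate any further obstacle.
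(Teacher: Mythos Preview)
Your argument is correct and is the standard derivation of this bound. The paper itself does not give a proof but simply refers the reader to equation~(2.5) in Rudnick~\cite{R}; what you have written is precisely the computation underlying that reference, namely the explicit formula for $\psi_n(\chi_V)$ coming from the factorisation of $L(u,\chi_V)$ together with Weil's bound $|\alpha_j|\le q^{1/2}$ on the inverse roots, followed by the elementary removal of proper prime powers.
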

\begin{proof}
See equation $2.5$ in \cite{R}. 
\end{proof}

\begin{lemma}\label{L4}
For $f\in\mathcal{M}$ we have
\[
\frac{1}{| \mathcal{H}_{2g+1}|}\sum_{D \in \mathcal{H}_{2g+1}} \chi_D(f^{2})=\prod_{P|f}\bigg(1-\frac{1}{|P|+1}\bigg)+O(q^{-2g}).
\]
\end{lemma}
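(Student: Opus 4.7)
The first observation is that $\chi_D(f^2) = \chi_D(f)^2 = \mathbf{1}_{(D,f)=1}$, which reduces the identity to the asymptotic
\[
\frac{\#\{D \in \mathcal{H}_{2g+1}: (D,f)=1\}}{|\mathcal{H}_{2g+1}|} = \prod_{P\mid f}\left(1 - \frac{1}{|P|+1}\right) + O(q^{-2g}).
\]

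The plan is to evaluate the numerator by extracting the coefficient of $u^{2g+1}$ from the generating function
\[
\sum_{n \geq 0} \#\{D \in \mathcal{H}_n: (D,f)=1\}\,u^n = \prod_{P \nmid f}(1+u^{d(P)}) = \frac{\mathcal{Z}(u)}{\mathcal{Z}(u^2)}\prod_{P\mid f}\frac{1}{1+u^{d(P)}} = \frac{1-qu^2}{1-qu}\prod_{P\mid f}\frac{1}{1+u^{d(P)}},
\]
where I used $\prod_{P}(1+u^{d(P)}) = \mathcal{Z}(u)/\mathcal{Z}(u^2)$. I would then expand $(1-qu^2)/(1-qu) = 1 + qu + (q-1)\sum_{n\geq 2}q^{n-1}u^n$ together with $\prod_{P\mid f}(1+u^{d(P)})^{-1} = \sum_{B\mid f^\infty}(-1)^{\Omega(B)}u^{d(B)}$ and read off the coefficient of $u^{2g+1}$; the dominant piece is
\[
(q-1)q^{2g}\sum_{\substack{B\mid f^\infty\\ d(B)\leq 2g-1}}\frac{(-1)^{\Omega(B)}}{q^{d(B)}},
\]
plus two small boundary contributions from $d(B)\in\{2g,2g+1\}$. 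Completing the truncated sum over all $B \mid f^\infty$ evaluates to $\prod_{P\mid f}(1+|P|^{-1})^{-1} = \prod_{P\mid f}|P|/(|P|+1) = \prod_{P\mid f}(1 - 1/(|P|+1))$, and dividing by $|\mathcal{H}_{2g+1}| = (q-1)q^{2g}$ yields exactly the claimed main term.

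What remains is error control. The discarded tail $\sum_{B\mid f^\infty,\, d(B) > 2g-1} q^{-d(B)}$ is dominated by a polynomial in $g$ (coming from the polynomial count of $B\mid f^\infty$ of each given degree, which depends on the number of distinct prime divisors of $f$) times $q^{-2g+1}$; the same bound handles the two boundary terms. After division by $|\mathcal{H}_{2g+1}|$ both contribute $O(q^{-2g})$ to the normalized count (with implicit $f$-dependence). I do not expect any genuine obstacle here: the only poles of $\prod_{P\mid f}(1+u^{d(P)})^{-1}$ lie on $|u|=1$, safely away from the dominant pole at $u=1/q$ of $\mathcal{Z}(u)/\mathcal{Z}(u^2)$, so their contribution to the coefficient of $u^{2g+1}$ is $O(1)$ in $g$ and is negligible after normalization.
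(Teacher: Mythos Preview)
Your argument is correct. The paper itself does not prove this lemma but merely cites Lemma~3.7 of \cite{bf}; your generating-function computation via $\prod_{P\nmid f}(1+u^{d(P)})=\mathcal{Z}(u)\mathcal{Z}(u^2)^{-1}\prod_{P\mid f}(1+u^{d(P)})^{-1}$ and extraction of the $u^{2g+1}$ coefficient is the standard route and is essentially what one finds in that reference.

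One small remark on the error term: the tail and boundary contributions you identify are genuinely of size $O_f\big(g^{\omega(f)-1}q^{-2g}\big)$ rather than $O(q^{-2g})$ with an $f$-dependent but $g$-independent constant, since the number of $B\mid f^\infty$ of degree $m$ grows like $m^{\omega(f)-1}$. This polynomial-in-$g$ factor is harmless in every application in the paper (where $\omega(f)\le 2$ and the errors are recorded as $O_\varepsilon(q^{-2g+\varepsilon g})$), but it is worth stating the dependence precisely rather than folding it into ``implicit $f$-dependence''.
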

\begin{proof}
See, for example, Lemma $3.7$ in \cite{bf}.
\end{proof}

\section{The one level density - using the Ratios Conjecture}
\label{1rc}
Consider
\begin{equation}\label{maineq}
I_1(N;\alpha)=\frac{1}{|\mathcal{H}_{2g+1}|}\sum_{D\in\mathcal{H}_{2g+1}}\sum_{f\in \mathcal{M}_{\leq N}}\frac{\Lambda(f)\chi_D(f)}{|f|^{1/2+\alpha}},
\end{equation}
where the shift is assumed to satisfy $|\alpha| \ll 1/g$.

Using an analogue of the Perron formula in the form
\begin{equation}\label{Perron}
\sum_{n\leq N}a(n)=\frac{1}{2\pi i}\oint_{|u|=r}\bigg(\sum_{n=0}^{\infty}a(n)u^{n}\bigg)\frac{du}{u^{N+1}(1-u)}
\end{equation}
we get
\begin{align*}
I_1(N;\alpha)&=\frac{1}{|\mathcal{H}_{2g+1}|}\sum_{D\in\mathcal{H}_{2g+1}}\frac{1}{2\pi i}\oint_{|u|=r}\sum_{f\in \mathcal{M}}\frac{\Lambda(f)\chi_D(f)u^{d(f)}}{|f|^{1/2+\alpha}}\frac{du}{u^{N+1}(1-u)}\\
&=\frac{1}{|\mathcal{H}_{2g+1}|}\sum_{D\in\mathcal{H}_{2g+1}} \frac{1}{2 \pi i} \oint_{|u|=r}  \frac{u}{q^{1/2+\alpha}} \frac{ \mathcal{L}'}{\mathcal{L}} \Big(\frac{u}{q^{1/2+\alpha}},\chi_D \Big) \frac{du}{u^{N+1}(1-u)}
\end{align*}
for any $r<q^{-1/2-\varepsilon}$. We enlarge the contour to $|u|=r=q^{-\varepsilon}$. The Ratios Conjecture implies that (see, for example, Theorem $8.1$ in \cite{bf})
\begin{align*}
\frac{1}{|\mathcal{H}_{2g+1}|} &\sum_{D\in\mathcal{H}_{2g+1}}u \frac{\mathcal{L}'}{\mathcal{L}} (u,\chi_D) = u^2 \frac{\mathcal{Z}'}{\mathcal{Z}}(u^2) - \mathcal{B}(u) + (qu^{2})^{g} \mathcal{A}_1(u)\mathcal{Z}\Big(\frac{1}{q^{2}u^{2}}\Big)+O_\varepsilon(q^{-g+\varepsilon g}),
\end{align*}
where \begin{equation}\label{mathcalBr}\mathcal{B}(u) = \sum_P \frac{ d(P) u^{2d(P)}}{(1-u^{2 d(P)})(|P|+1)}\end{equation} and
\begin{align*}
\mathcal{A}_1(u) &= \prod_P \Big(1- \frac{1}{|P|} \Big)^{-1} \Big( 1-\frac{1}{|P|^2u^{2 d(P)} (|P|+1)}-\frac{1}{|P|+1}\Big) \\
&= \prod_P \Big(1-\frac{1}{|P|^2} \Big)^{-1} \Big(1- \frac{1}{|P|^3 u^{2d(P)}} \Big) = \frac{\mathcal{Z}(1/q^{2})}{\mathcal{Z}(1/q^{3}u^{2})}=1+\frac{1-(qu^2)^{-1}}{q-1}.
\end{align*}
Hence, up to an error of size $O_\varepsilon(q^{-g+\varepsilon g})$,
\begin{align}\label{1}
I_1(N;\alpha)&=\frac{1}{2\pi i}\oint_{|u|=r}\frac{du}{u^{N-1}(1-u)(q^{2\alpha}-u^2)}-\frac{1}{2\pi i}\oint_{|u|=r}\frac{\mathcal{B}(u,\alpha)du}{u^{N+1}(1-u)}\\
&\qquad\quad+\frac{q^{-2g\alpha}}{2\pi i}\oint_{|u|=r}\frac{du}{u^{N-2g-1}(1-u)(u^2-q^{2\alpha})}+\frac{q^{-2g\alpha}}{2\pi i(q-1)}\oint_{|u|=r}\frac{du}{u^{N-2g+1}(1-u)},\nonumber
\end{align}
where
\begin{equation*}
\mathcal{B}(u,\alpha)=\mathcal{B}\Big(\frac{u}{q^{1/2+\alpha}}\Big).
\end{equation*}

Enlarging the contours we cross the poles at $u=1$ and $u=\pm q^{\alpha}$ in the first integral, and the only pole at $u=1$ in the second integral. Note that $\mathcal{B}(u,\alpha)$ is absolutely convergent for $|u|<q^{1/2-\varepsilon}$, so in the second integral we shift the contour to $|u|=q^{1/2-\varepsilon}$, obtaining an error term of size $O_\varepsilon(q^{-N/2+\varepsilon N})$. Hence the contribution of the first two terms in \eqref{1} is equal to
\begin{align}\label{type01}
&\frac{q^{-2[N/2]\alpha}-1}{1-q^{2\alpha}}-B(\alpha) +O_\varepsilon(q^{-N/2+\varepsilon N}),
\end{align}
where
\begin{equation}\label{Br}
B(\alpha):=\mathcal{B}(1,\alpha)= \sum_P \frac{ d(P) }{(|P|^{1+2\alpha}-1)(|P|+1)}.
\end{equation}
This should correspond to the diagonal terms.

For the remaining two terms in \eqref{1}, we note that they vanish if $N< 2g$, and if $N\geq 2g$ they contribute
\begin{align}\label{type1}
\frac{q^{-2g\alpha}-q^{-2[N/2]\alpha}}{1-q^{2\alpha}}+\frac{q^{-2g\alpha}}{q-1}.
\end{align}
This should correspond to the Type-I terms. Combining \eqref{type01} and \eqref{type1} we arrive at the following conjecture.

\begin{conjecture}\label{conjecture1level}
We have
\begin{align*}
I_1(N;\alpha)&=\frac{q^{-2[N/2]\alpha}-1}{1-q^{2\alpha}}-B(\alpha)+\mathds{1}_{N\geq 2g}\bigg(\frac{q^{-2g\alpha}-q^{-2[N/2]\alpha}}{1-q^{2\alpha}}+\frac{q^{-2g\alpha}}{q-1}\bigg)\\
&\qquad\qquad+O_\varepsilon(q^{-g+\varepsilon g}) +O_\varepsilon(q^{-N/2+\varepsilon N}).
\end{align*}
\end{conjecture}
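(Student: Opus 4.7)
I would evaluate $I_1(N;\alpha)$ unconditionally and match the answer term by term against the conjectured formula. The von Mangoldt weight restricts $f$ to prime powers $P^k$; the range $k\geq 3$ is $O_\varepsilon(q^{-g+\varepsilon g})$ by Lemma \ref{pv} and the rapid decay of $|P|^{-k/2}$, well within the claimed error. The substantive analysis splits between $k=2$, which produces the Type-$0$ (diagonal) terms, and $k=1$, which produces the Type-I terms via Poisson summation.

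\textbf{Diagonal ($k=2$).} By Lemma \ref{L4},
\[
\frac{1}{|\mathcal{H}_{2g+1}|}\sum_{D\in\mathcal{H}_{2g+1}}\chi_D(P^2)=1-\frac{1}{|P|+1}+O(q^{-2g}),
\]
so the $k=2$ contribution equals
\[
\sum_{d(P)\leq N/2}\frac{d(P)}{|P|^{1+2\alpha}}-\sum_{d(P)\leq N/2}\frac{d(P)}{|P|^{1+2\alpha}(|P|+1)}+O_\varepsilon(q^{-g+\varepsilon g}).
\]
The prime polynomial theorem reduces the first sum to the geometric series $\sum_{n=1}^{[N/2]}q^{-2n\alpha}=\frac{q^{-2[N/2]\alpha}-1}{1-q^{2\alpha}}$, while extending the second sum to all $P$ (with tail $O_\varepsilon(q^{-N/2+\varepsilon N})$) yields $-B(\alpha)$ from \eqref{Br}.

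\textbf{Off-diagonal ($k=1$).} I apply Lemma \ref{L1} to drop the squarefree restriction on $D$, reducing the problem to weighted sums of $\sum_{h\in\mathcal{M}_m}\chi_P(h)$. Lemma \ref{L2} reshapes each $h$-sum into a dual sum over $V$ of Gauss sums $G(V,P)$, and Lemma \ref{L3} evaluates these: writing $V=V_1 P^{a}$ with $P\nmid V_1$, only the contributions with $d(V)<d(P)$ and the isolated powers-of-$P$ terms survive, yielding $G(V,P)=\chi_P(V_1)|P|^{d(P)-1/2}$ in the main range. Applying quadratic reciprocity (valid since $q\equiv 1\pmod 4$) turns the remaining expression into a double sum $\sum_{V_1}\sum_P \chi_{V_1}(P)$. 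The square/non-square dichotomy for $V_1$ is then decisive: non-square $V_1$ is absorbed via the Weil bound of Lemma \ref{sumprimes}, whereas square $V_1=W^2$ reduces (via Lemma \ref{L4} and the prime polynomial theorem) to an explicit main term.

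\textbf{Main obstacle.} The crux is showing that the square-$V_1$ contribution is exactly $\mathds{1}_{N\geq 2g}\big(\frac{q^{-2g\alpha}-q^{-2[N/2]\alpha}}{1-q^{2\alpha}}+\frac{q^{-2g\alpha}}{q-1}\big)$. The indicator arises naturally: the degree constraint $d(V)\leq d(P)-m-1$ in Lemma \ref{L2}, combined with $m\approx 2g+1$ from Lemma \ref{L1}, forces non-trivial dual $V$ only once $d(P)\geq 2g$, i.e. once $N\geq 2g$. The genuine technical work is twofold: (i) organising the Gauss-sum expansion so that square and non-square pieces separate cleanly and all parity conditions on $d(P)$ and $d(V)$ are bookkept, and (ii) keeping the non-square/Weil error within $O_\varepsilon(q^{-g+\varepsilon g})$ uniformly in the stated range $N<4g$ — this is the boundary of what the method can reach before the Type-II contributions enter. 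Once these are settled, matching the explicit square-$V_1$ expression against the residue evaluation of \eqref{1} reduces to a direct algebraic verification.
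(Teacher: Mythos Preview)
Your plan follows the paper's Section~\ref{1compute} rather than its Section~\ref{1rc}: the paper \emph{derives} Conjecture~\ref{conjecture1level} heuristically via the Ratios Conjecture (writing $I_1$ as a Perron integral of $\mathcal{L}'/\mathcal{L}$, inserting the conjectural average, and evaluating residues), and only afterwards, in Section~\ref{1compute}, \emph{verifies} the answer rigorously for $N<4g$ by the diagonal/off-diagonal split you describe. So your approach matches the verification step, not the derivation; this is fine, but note that the rigorous argument yields the weaker error $O(Nq^{N/2-2g})+O(q^{-g})$, not the $O_\varepsilon(q^{-g+\varepsilon g})$ of the conjecture.

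There is a genuine gap in your diagonal. You claim the exponents $k\ge 3$ contribute $O_\varepsilon(q^{-g+\varepsilon g})$, but this fails for even $k\ge 4$: for $f=P^{4}$ one has $\chi_D(P^4)=\chi_D(P^2)^2\in\{0,1\}$, so Lemma~\ref{pv} gives nothing, and the contribution is $\sum_{4d(P)\le N}d(P)|P|^{-2-4\alpha}\asymp 1/(q-1)$, which is $O(1)$, not $o(1)$. Correspondingly, your $k=2$ diagonal does not give the stated main terms: $\sum_{d(P)\le N/2}d(P)|P|^{-1-2\alpha}$ differs from the geometric series by the $O(1)$ prime-power correction in the Prime Polynomial Theorem, and extending your second sum to all $P$ yields $\sum_P d(P)/\big(|P|^{1+2\alpha}(|P|+1)\big)$, which is \emph{not} $B(\alpha)=\sum_P d(P)/\big((|P|^{1+2\alpha}-1)(|P|+1)\big)$. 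The paper repairs this by keeping all even powers $f=P^{2k}$ in the diagonal: the inner geometric series in $k$ produces the factor $(|P|^{1+2\alpha}-1)^{-1}$ exactly, and the first sum becomes $\sum_{d(f)\le [N/2]}\Lambda(f)|f|^{-1-2\alpha}=\sum_{n\le[N/2]}q^{-2n\alpha}$ via the exact identity $\sum_{d(f)=n}\Lambda(f)=q^n$.

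Your off-diagonal sketch is essentially the paper's argument. Two small corrections: Lemma~\ref{L4} is not used there (once $V=\square$ with $d(V)<d(P)$ one has $\chi_P(V)=1$ trivially), and the non-square Weil error the paper actually obtains is $O(Nq^{N/2-2g})$, consistent with your own caveat about the boundary $N\to 4g$.
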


\section{The one level density}
\label{1compute}
We assume in this section that $N<4g$.

\subsection{The diagonal}

The diagonal, denoted by $I_1^0(N;\alpha)$, corresponds to the terms $f=P^{2k}$ in \eqref{maineq}, and so in view of Lemma \ref{L4} we have
\[
I_1^0(N;\alpha)=\sum_{1\leq kn\leq [N/2]}\sum_{P\in\mathcal{P}_{n}}\frac{d(P)}{|P|^{k(1+2\alpha)}}-\sum_{1\leq kn\leq [N/2]}\sum_{P\in\mathcal{P}_{n}}\frac{d(P)}{|P|^{k(1+2\alpha)}(|P|+1)}+O_\varepsilon(q^{-2g+\varepsilon g}).
\]
The first term, by the Prime Polynomial Theorem, is equal to
\begin{align*}
\sum_{d(f)\leq [N/2]}\frac{\Lambda(f)}{|f|^{1+2\alpha}}&=\sum_{1\leq n\leq [N/2]}q^{-2n\alpha }=\frac{q^{-2[N/2]\alpha}-1}{1-q^{2\alpha}}.
\end{align*}
For the second term, note that
\begin{align*}
\sum_{1\leq kn\leq [N/2]}\sum_{P\in\mathcal{P}_{n}}\frac{d(P)}{|P|^{k(1+2\alpha)}(|P|+1)}&=B(\alpha)-\sum_{kn> [N/2]}\sum_{P\in\mathcal{P}_{n}}\frac{d(P)}{|P|^{k(1+2\alpha)}(|P|+1)}\\
&=B(\alpha)+O_\varepsilon\big(q^{-N/2+\varepsilon g}\big).
\end{align*}
Hence,
\[
I_1^0(N;\alpha)=\frac{q^{-2[N/2]\alpha}-1}{1-q^{2\alpha}}-B(\alpha)+O_\varepsilon\big(q^{-N/2+\varepsilon g}\big).
\]
Notice that the leading term matches up with \eqref{type01}.

\subsection{Type-I terms}

We now evaluate the off-diagonal terms corresponding to $f=P^{2k+1}$ in \eqref{maineq},
\[
I_1^1(N;\alpha)=\frac{1}{|\mathcal{H}_{2g+1}|}\sum_{d(P^{2k+1})\leq N}\frac{d(P)}{|P|^{(2k+1)(1/2+\alpha)}}\sum_{D\in\mathcal{H}_{2g+1}}\chi_D(P).
\]
Combining the Polya-Vinogradov inequality in Lemma \ref{pv} with the Prime Polynomial Theorem, the contribution of the terms with $k\geq 1$ is
\begin{equation*}
  \ll q^{-2g}\sum_{n\leq N}\sum_{k\geq1}q^{-(k-1)n}\ll Nq^{-2g},
  \end{equation*}
and the contribution of the terms with $d(P)=n$ is
\[
\ll q^{n-2g}.
\] So
\[
I_1^1(N;\alpha)=\frac{1}{|\mathcal{H}_{2g+1}|}\sum_{g+1\leq d(P)\leq N}\frac{d(P)}{|P|^{1/2+\alpha}}\sum_{D\in\mathcal{H}_{2g+1}}\chi_D(P)+O(q^{- g}).
\]

From Lemma \ref{L1} we have
\begin{align*}
&\sum_{D\in\mathcal{H}_{2g+1}}\chi_D(P)=\sum_{C|P^\infty}\sum_{h\in\mathcal{M}_{2g+1-2d(C)}}\chi_P(h)-q\sum_{C|P^\infty}\sum_{h\in\mathcal{M}_{2g-1-2d(C)}}\chi_P(h).
\end{align*}
The sums over $h$ are non-zero only if $0\leq 2g\pm1-2d(C)<d(P) $. Since $C|P^\infty$ and $d(P)\geq g+1$, we must have $C=1$ and, consequently, $d(P)\geq 2g$. Thus,
\[
I_1^1(N;\alpha)=\frac{1}{|\mathcal{H}_{2g+1}|}\sum_{2g\leq d(P)\leq N}\frac{d(P)}{|P|^{1/2+\alpha}}\bigg(\sum_{h\in\mathcal{M}_{2g+1}}\chi_P(h)-q\sum_{h\in\mathcal{M}_{2g-1}}\chi_P(h)\bigg)+O(q^{- g}).
\]

Consider the terms with $d(P)$ odd. Applying Lemma \ref{L2} and Lemma \ref{L3}, the expression inside the bracket is
\[
\frac{q^{2g+3/2}}{|P|^{1/2}}\sum_{d(V)= d(P)-2g-2}\chi_P(V)-\frac{q^{2g+1/2}}{|P|^{1/2}}\sum_{d(V)= d(P)-2g}\chi_P(V).
\]
Notice that $V$ cannot be a square in the sums, and hence by Lemma \ref{sumprimes}, the contribution of these terms to $I_1^1(N;\alpha)$ is $O(Nq^{N/2-2g})$.

If $d(P)$ is even, then from Lemma \ref{L2} and Lemma \ref{L3} we have
\begin{align*}
\sum_{h\in\mathcal{M}_{2g+1}}\chi_P(h)-q\sum_{h\in\mathcal{M}_{2g-1}}\chi_P(h)=&\frac{q^{2g+1}}{|P|^{1/2}}\bigg(q\sum_{d(V)\leq d(P)-2g-3}\chi_P(V)-\sum_{d(V)\leq d(P)-2g-2}\chi_P(V)\bigg)\\
&\ -\frac{q^{2g}}{|P|^{1/2}}\bigg(q\sum_{d(V)\leq d(P)-2g-1}\chi_P(V)-\sum_{d(V)\leq d(P)-2g}\chi_P(V)\bigg).
\end{align*}
As above, the contribution of the terms $V$ non-square is negligible. For $V=\square$, as $d(V)<d(P)$ we have $\chi_P(V)=1$. Thus, the contribution from $V=\square$ is
\begin{align*}
&\frac{q^{2g+1}}{|P|^{1/2}}\bigg(q\sum_{d(V)\leq d(P)/2-g-2}1-\sum_{d(V)\leq d(P)/2-g-1}1\bigg)-\frac{q^{2g}}{|P|^{1/2}}\bigg(q\sum_{d(V)\leq d(P)/2-g-1}1-\sum_{d(V)\leq d(P)/2-g}1\bigg)\\
&\qquad=\begin{cases}
-\frac{q^{2g}(q-1)}{|P|^{1/2}}& \textrm{if }d(P)\geq 2g+2,\\
q^g & \textrm{if }d(P)=2g.
\end{cases}
\end{align*}
We hence obtain that
\[
I_1^1(N;\alpha)=\mathds{1}_{N\geq 2g}\bigg(-\sum_{g+1\leq n\leq[N/2]}\sum_{P\in\mathcal{P}_{2n}}\frac{d(P)}{|P|^{1+\alpha}}+\frac{q^{-2g\alpha}}{q-1}\bigg)+O(Nq^{N/2-2g})+O(q^{- g}).
\]

Now, in view of the Prime Polynomial Theorem,
\begin{align*}
&\sum_{g+1\leq n\leq[N/2]}\sum_{P\in\mathcal{P}_{2n}}\frac{d(P)}{|P|^{1+\alpha}}=\sum_{g+1\leq n\leq[N/2]}q^{-2n(1+\alpha)}\big(q^{2n}+O(q^n)\big)\\
&\qquad\qquad=\sum_{g+1\leq n\leq[N/2]}q^{-2n\alpha}+O(q^{- g})=-\frac{q^{-2g\alpha}-q^{-2[N/2]\alpha}}{1-q^{2\alpha}}+O(q^{- g}).
\end{align*}
So
\[
I_1^1(N;\alpha)=\mathds{1}_{N\geq 2g}\bigg(\frac{q^{-2g\alpha}-q^{-2[N/2]\alpha}}{1-q^{2\alpha}}+\frac{q^{-2g\alpha}}{q-1}\bigg)+O(Nq^{N/2-2g})+O(q^{- g}).
\]
Notice that the leading term matches up with \eqref{type1}.

\section{The two level density - Using the Ratios Conjecture}
\label{2rc}
\subsection{The Ratios Conjecture}

We would like to study
\[
\frac{1}{|\mathcal{H}_{2g+1}|}\sum_{D\in\mathcal{H}_{2g+1}}\frac{L(1/2+\alpha,\chi_D)L(1/2+\beta,\chi_D)}{L(1/2+\gamma,\chi_D)L(1/2+\delta,\chi_D)}
\]
using the recipe in \cite{CS}, where the shifts are assumed to satisfy $|\alpha|, |\beta|, |\gamma|, |\delta| \ll 1/g$. 

We use the approximate functional equation for each of the two $L$--functions in the numerator. The contribution coming from the first parts of the approximate functional equations is equal to
\[
\frac{1}{|\mathcal{H}_{2g+1}|}\sum_{f_1,f_2,h_1,h_2}\frac{\mu(h_1)\mu(h_2)}{|f_1|^{1/2+\alpha}|f_2|^{1/2+\beta}|h_1|^{1/2+\gamma}|h_2|^{1/2+\delta}}\sum_{D\in\mathcal{H}_{2g+1}}\chi_D(f_1f_2h_1h_2).
\]
We only keep the terms with $f_1f_2h_1h_2=\square$. The above expression then becomes
\begin{align*}
&\sum_{f_1f_2h_1h_2=\square}\frac{\mu(h_1)\mu(h_2)a(f_1f_2h_1h_2)}{|f_1|^{1/2+\alpha}|f_2|^{1/2+\beta}|h_1|^{1/2+\gamma}|h_2|^{1/2+\delta}},
\end{align*}
where
\[
a(f)=\prod_{P|f}\bigg(1+\frac{1}{|P|}\bigg)^{-1}.
\]
Using multiplicativity, this is equal to
\begin{align*}
&\prod_{P}\sum_{\substack{f_1,f_2,h_1,h_2\\f_1+f_2+h_1+h_2\ \textrm{even}}}\frac{\mu(P^{h_1})\mu(P^{h_2})a(P^{f_1+f_2+h_1+h_2})}{|P|^{(1/2+\alpha)f_1+(1/2+\beta)f_2+(1/2+\gamma)h_1+(1/2+\delta)h_2}}\\
&\qquad\qquad=A(\alpha,\beta,\gamma,\delta)\frac{\zeta_q(1+2\alpha)\zeta_q(1+2\beta)\zeta_q(1+\alpha+\beta)\zeta_q(1+\gamma+\delta)}{\zeta_q(1+\alpha+\gamma)\zeta_q(1+\alpha+\delta)\zeta_q(1+\beta+\gamma)\zeta_q(1+\beta+\delta)},
\end{align*}
where 
\begin{align*}
&A(\alpha,\beta,\gamma,\delta)=\prod_P\bigg(1+\frac{1}{|P|}\bigg)^{-1}\bigg(1-\frac{1}{|P|^{1+\alpha+\beta}}\bigg)\bigg(1-\frac{1}{|P|^{1+\gamma+\delta}}\bigg)\\
&\qquad \bigg(1-\frac{1}{|P|^{1+\alpha+\gamma}}\bigg)^{-1}\bigg(1-\frac{1}{|P|^{1+\alpha+\delta}}\bigg)^{-1}\bigg(1-\frac{1}{|P|^{1+\beta+\gamma}}\bigg)^{-1}\bigg(1-\frac{1}{|P|^{1+\beta+\delta}}\bigg)^{-1}\\
&\qquad\qquad\bigg(1+\frac{1}{|P|}+\frac{1}{|P|^{1+\alpha+\beta}}+\frac{1}{|P|^{1+\gamma+\delta}}-\frac{1}{|P|^{1+\alpha+\gamma}}-\frac{1}{|P|^{1+\alpha+\delta}}-\frac{1}{|P|^{1+\beta+\gamma}}-\frac{1}{|P|^{1+\beta+\delta}}\\
&\qquad\qquad\qquad-\frac{1}{|P|^{2+2\alpha}}-\frac{1}{|P|^{2+2\beta}}+\frac{1}{|P|^{2+\alpha+\beta+\gamma+\delta}}+\frac{1}{|P|^{3+2\alpha+2\beta}}\bigg).
\end{align*}

The contributions from the other parts of the approximate functional equations can be determined by using the functional equation
\[
L(\tfrac{1}{2}+\alpha,\chi_D)=q^{-2g\alpha }L(\tfrac{1}{2}-\alpha,\chi_D).
\]Hence we have the following.

\begin{conjecture}
We have
\begin{align*}
&\frac{1}{|\mathcal{H}_{2g+1}|}\sum_{D\in\mathcal{H}_{2g+1}}\frac{L(1/2+\alpha,\chi_D)L(1/2+\beta,\chi_D)}{L(1/2+\gamma,\chi_D)L(1/2+\delta,\chi_D)}\\
&\qquad=A(\alpha,\beta,\gamma,\delta)\frac{\zeta_q(1+2\alpha)\zeta_q(1+2\beta)\zeta_q(1+\alpha+\beta)\zeta_q(1+\gamma+\delta)}{\zeta_q(1+\alpha+\gamma)\zeta_q(1+\alpha+\delta)\zeta_q(1+\beta+\gamma)\zeta_q(1+\beta+\delta)}\\
&\qquad\qquad+q^{-2g\alpha }A(-\alpha,\beta,\gamma,\delta)\frac{\zeta_q(1-2\alpha)\zeta_q(1+2\beta)\zeta_q(1-\alpha+\beta)\zeta_q(1+\gamma+\delta)}{\zeta_q(1-\alpha+\gamma)\zeta_q(1-\alpha+\delta)\zeta_q(1+\beta+\gamma)\zeta_q(1+\beta+\delta)}\\
&\qquad\qquad+q^{-2g\beta }A(\alpha,-\beta,\gamma,\delta)\frac{\zeta_q(1+2\alpha)\zeta_q(1-2\beta)\zeta_q(1+\alpha-\beta)\zeta_q(1+\gamma+\delta)}{\zeta_q(1+\alpha+\gamma)\zeta_q(1+\alpha+\delta)\zeta_q(1-\beta+\gamma)\zeta_q(1-\beta+\delta)}\\
&\qquad\qquad+q^{-2g(\alpha+\beta) }A(-\alpha,-\beta,\gamma,\delta)\frac{\zeta_q(1-2\alpha)\zeta_q(1-2\beta)\zeta_q(1-\alpha-\beta)\zeta_q(1+\gamma+\delta)}{\zeta_q(1-\alpha+\gamma)\zeta_q(1-\alpha+\delta)\zeta_q(1-\beta+\gamma)\zeta_q(1-\beta+\delta)}\\
&\qquad\qquad+O_\varepsilon\big(q^{-g+\varepsilon g}\big).
\end{align*}
\end{conjecture}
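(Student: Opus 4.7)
The first term of the conjecture has already been produced in the derivation that precedes its statement; the three remaining terms arise from the same recipe applied to the other combinations of principal and dual parts in the approximate functional equations for the two numerator $L$-functions.

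The plan is to decompose each of $L(1/2+\alpha,\chi_D)$ and $L(1/2+\beta,\chi_D)$ into a principal sum of shape $\sum_{f}\chi_D(f)/|f|^{1/2+\alpha}$ plus a dual sum of shape $q^{-2g\alpha}\sum_{f}\chi_D(f)/|f|^{1/2-\alpha}$, the latter obtained via the functional equation $L(1/2+\alpha,\chi_D)=q^{-2g\alpha}L(1/2-\alpha,\chi_D)$ applied to the shorter half of the polynomial, and analogously with $\beta$. Expanding the two denominator $L$-functions as Dirichlet series with M\"obius coefficients $\mu(h_1),\mu(h_2)$ and multiplying these four decompositions together yields $2\times 2=4$ quadruple sums over $(f_1,f_2,h_1,h_2)\in\mathcal{M}^4$. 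For each of these pieces one interchanges with the average over $D\in\mathcal{H}_{2g+1}$ and applies the Conrey--Farmer--Zirnbauer recipe: discard everything except tuples with $f_1f_2h_1h_2=\square$, where the character average is replaced by $a(f_1f_2h_1h_2)$ via Lemma \ref{L4}.

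The (principal, principal) piece is exactly the one displayed in the excerpt, and evaluates via multiplicativity and an explicit Euler-product computation to $A(\alpha,\beta,\gamma,\delta)$ times the displayed ratio of $\zeta_q$-factors. The (dual, principal), (principal, dual) and (dual, dual) pieces are obtained from the identical computation after substituting $\alpha\to-\alpha$, $\beta\to-\beta$, or both, with the corresponding global prefactors $q^{-2g\alpha}$, $q^{-2g\beta}$, $q^{-2g(\alpha+\beta)}$ carried through unchanged. Since the Euler-product manipulation is insensitive to the signs of $\alpha,\beta$ as long as the recipe is applied termwise, these three substitutions produce the second, third and fourth terms of the conjecture, in the stated shape.

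The main technical step is the Euler-product bookkeeping for the (principal, principal) piece. At each prime $P$ one sums the four-variable series in $(f_1,f_2,h_1,h_2)$ with $h_1,h_2\in\{0,1\}$ (because $\mu(P^h)=0$ for $h\geq 2$) and $f_1,f_2\geq 0$ constrained by $f_1+f_2+h_1+h_2$ even, then factors out the eight first-order poles at $2\alpha=0$, $2\beta=0$, $\alpha+\beta=0$, $\gamma+\delta=0$, $\alpha+\gamma=0$, $\alpha+\delta=0$, $\beta+\gamma=0$, $\beta+\delta=0$ corresponding to the displayed zeta quotient. Verifying that the residual local factor at $P$ collapses to precisely the polynomial in $|P|^{-1},|P|^{-(1+\alpha+\beta)},\ldots,|P|^{-(3+2\alpha+2\beta)}$ that defines $A(\alpha,\beta,\gamma,\delta)$ is a fiddly but routine algebraic identity. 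The error term $O_\varepsilon(q^{-g+\varepsilon g})$ absorbs the unproven content of the recipe, namely the discarding of non-square and off-diagonal contributions.
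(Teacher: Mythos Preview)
Your proposal is correct and follows essentially the same route as the paper: apply the approximate functional equation to each numerator $L$-function, expand the denominators with M\"obius, retain only the $f_1f_2h_1h_2=\square$ terms with weight $a(f_1f_2h_1h_2)$, compute the resulting Euler product to extract the $\zeta_q$-quotient and $A(\alpha,\beta,\gamma,\delta)$, and then obtain the remaining three terms by the sign swaps $\alpha\to-\alpha$, $\beta\to-\beta$ dictated by the functional equation. The only quibble is terminological: of the eight local factors you list, four correspond to poles (the numerator $\zeta_q$'s) and four to zeros (the denominator $\zeta_q$'s), but you clearly have the correct quotient in mind.
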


Notice that for a function $f(u,v)$ analytic at $(u,v)=(r,r)$ and a function $F(s)$ having a simple pole at $s=1$ with residue $r_{F}$, we have
\begin{equation*}\label{trick101}
\frac{\partial}{\partial\alpha}\frac{f(\alpha,\gamma)}{F(1-\alpha+\gamma)}\bigg|_{\alpha=\gamma=r}=-\frac{f(r,r)}{r_{F}}.
\end{equation*}
As $r_{\zeta_q}=1/\log q$, taking derivatives with respect to $\alpha$ and $\beta$, and setting $\gamma=\alpha$, $\delta=\beta$ we obtain

\begin{conjecture}
We have
\begin{align*}
&\frac{1}{|\mathcal{H}_{2g+1}|}\sum_{D\in\mathcal{H}_{2g+1}}\frac{L'}{L}(\tfrac12+\alpha,\chi_D)\frac{L'}{L}(\tfrac12+\beta,\chi_D)\\
&\qquad=\frac{\zeta_q'}{\zeta_q}(1+2\alpha)\frac{\zeta_q'}{\zeta_q}(1+2\beta)+\bigg(\frac{\zeta_q'}{\zeta_q}\bigg)'(1+\alpha+\beta)\nonumber\\
&\qquad\qquad+(\log q)B(\alpha)\frac{\zeta_q'}{\zeta_q}(1+2\beta)+(\log q)B(\beta)\frac{\zeta_q'}{\zeta_q}(1+2\alpha)+(\log q)^2C(\alpha,\beta)\\
&\qquad\qquad+q^{-2g\alpha }(\log q)^2A_2(\alpha)T_2(\alpha,\beta)+q^{-2g\beta }(\log q)^2A_2(\beta)T_2(\beta,\alpha)\\
&\qquad\qquad+q^{-2g(\alpha+\beta) }(\log q)^2A(\alpha,\beta)\frac{\zeta_q(1-2\alpha)\zeta_q(1-2\beta)\zeta_q(1-\alpha-\beta)\zeta_q(1+\alpha+\beta)}{\zeta_q(1-\alpha+\beta)\zeta_q(1+\alpha-\beta)}\\
&\qquad\qquad+O_\varepsilon\big(q^{-g+\varepsilon g}\big),
\end{align*}
where 
\begin{align*}
A_2(\alpha)&:=A(-\alpha,\beta,\alpha,\beta)\zeta_q(1-2\alpha)=\frac{\zeta_q(2)\zeta_q(1-2\alpha)}{\zeta_q(2-2\alpha)}\\
&=\frac{1}{1-q^{2\alpha}}+\frac{1}{q-1}=\frac{q^{2\alpha}}{1-q^{2\alpha}}+\frac{q}{q-1},
\end{align*}
\begin{align*}
A(\alpha,\beta)&:=A(-\alpha,-\beta,\alpha,\beta)\\
&=\prod_{P}\bigg(1+\frac{1}{|P|}\bigg)^{-1}\bigg(1-\frac{1}{|P|}\bigg)^{-2}\bigg(1-\frac{1}{|P|^{1-\alpha-\beta}}\bigg)\bigg(1-\frac{1}{|P|^{1+\alpha+\beta}}\bigg)\\
&\qquad \bigg(1-\frac{1}{|P|^{1-\alpha+\beta}}\bigg)^{-1}\bigg(1-\frac{1}{|P|^{1+\alpha-\beta}}\bigg)^{-1}\bigg(1-\frac{1}{|P|}+\frac{1}{|P|^{1-\alpha-\beta}}+\frac{1}{|P|^{1+\alpha+\beta}}\\
&\qquad\qquad-\frac{1}{|P|^{1-\alpha+\beta}}-\frac{1}{|P|^{1+\alpha-\beta}}-\frac{1}{|P|^{2-2\alpha}}-\frac{1}{|P|^{2-2\beta}}+\frac{1}{|P|^{2}}+\frac{1}{|P|^{3-2\alpha-2\beta}}\bigg),
\end{align*}
$B(\alpha)$ is defined in \eqref{Br}, 
\begin{align*}
C(\alpha,\beta)&=B(\alpha)B(\beta)+\sum_{P}\frac{d(P)^2\big(|P|^{2+\alpha+\beta}(|P|+1)(|P|^\alpha-|P|^\beta)^2-(|P|^{1+\alpha+\beta}-1)^3\big)}{(|P|^{1+2\alpha}-1)(|P|^{1+2\beta}-1)(|P|^{1+\alpha+\beta}-1)^2(|P|+1)}\\
&\qquad -\sum_{P}\frac{d(P)^2}{(|P|^{1+2\alpha}-1)(|P|^{1+2\beta}-1)(|P|+1)^2}\\
&=B(\alpha)B(\beta)+\sum_{P}\frac{d(P)^2|P|^{2+\alpha+\beta}(|P|^\alpha-|P|^\beta)^2}{(|P|^{1+2\alpha}-1)(|P|^{1+2\beta}-1)(|P|^{1+\alpha+\beta}-1)^2}\\
&\qquad-\sum_{P}\frac{d(P)^2|P|^{1+\alpha+\beta}}{(|P|^{1+2\alpha}-1)(|P|^{1+2\beta}-1)(|P|+1)}+\sum_{P}\frac{d(P)^2|P|}{(|P|^{1+2\alpha}-1)(|P|^{1+2\beta}-1)(|P|+1)^2}
\end{align*}
and
\begin{align}\label{factno6}
T_2(\alpha,\beta)&=\frac{1}{\log q}\bigg(\frac{\zeta_q'}{\zeta_q}(1+\alpha+\beta)-\frac{\zeta_q'}{\zeta_q}(1-\alpha+\beta)-\frac{\zeta_q'}{\zeta_q}(1+2\beta)-\frac{\partial A_2(-\alpha,b,\alpha,\beta)/\partial b\big|_{b=\beta}}{A_2(-\alpha,\beta,\alpha,\beta)}\bigg)\nonumber\\
&=\sum_{P}\frac{d(P)(|P|^{2(1-\alpha)}-|P|^{1-2\alpha}-|P|^{2-3\alpha+\beta}+|P|^{2-\alpha+\beta})}{(|P|^{2(1-\alpha)}-1)(|P|^{1+2\beta}-1)}.
\end{align}
\end{conjecture}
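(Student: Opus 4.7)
The plan is to derive this statement from the preceding ratios conjecture by exploiting the identity
\[
\frac{L'}{L}\bigl(\tfrac12+\alpha,\chi_D\bigr)\frac{L'}{L}\bigl(\tfrac12+\beta,\chi_D\bigr)=\frac{\partial^2}{\partial\alpha\,\partial\beta}\bigg|_{\gamma=\alpha,\delta=\beta}\frac{L(\tfrac12+\alpha,\chi_D)L(\tfrac12+\beta,\chi_D)}{L(\tfrac12+\gamma,\chi_D)L(\tfrac12+\delta,\chi_D)},
\]
which is valid because the ratio equals $1$ whenever $\gamma=\alpha$ and $\delta=\beta$. Since the error $O_\varepsilon(q^{-g+\varepsilon g})$ in the preceding conjecture is uniform in the shifts in the range considered, the plan is to apply $\partial_\alpha\partial_\beta|_{\gamma=\alpha,\delta=\beta}$ to each of its four explicit summands in turn and to check that the results assemble into the claimed formula.

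For the first (non-oscillatory) summand, write it as $A(\alpha,\beta,\gamma,\delta)\,Z(\alpha,\beta,\gamma,\delta)$ with $Z$ the zeta-ratio. At $(\alpha,\beta,\alpha,\beta)$ the four denominator zetas pair off with the four numerator zetas, while the large bracket in each local factor of $A$ collapses (a short algebraic check) to $(1+|P|^{-1})(1-|P|^{-1-2\alpha})(1-|P|^{-1-2\beta})$, so that after cancellation $A\equiv Z\equiv 1$ at the evaluation point. Leibniz then gives
\[
\partial_\alpha\partial_\beta(AZ)\big|_{\cdot}=\partial_\alpha\partial_\beta Z\big|_{\cdot}+(\partial_\alpha A)\big|_{\cdot}\,\partial_\beta Z\big|_{\cdot}+(\partial_\beta A)\big|_{\cdot}\,\partial_\alpha Z\big|_{\cdot}+\partial_\alpha\partial_\beta A\big|_{\cdot}.
\]
A logarithmic differentiation of $Z$ produces $(\zeta_q'/\zeta_q)(1+2\alpha)(\zeta_q'/\zeta_q)(1+2\beta)+(\zeta_q'/\zeta_q)'(1+\alpha+\beta)$ for the first term, while logarithmic differentiation of the Euler product for $A$, specialised at $(\alpha,\beta,\alpha,\beta)$, identifies $(\partial_\alpha A)|_{\cdot}=(\log q)B(\alpha)$ and $(\partial_\beta A)|_{\cdot}=(\log q)B(\beta)$; combining these with $\partial_\alpha\partial_\beta A|_\cdot$ reorganised in the same way yields the $(\log q)^2 C(\alpha,\beta)$ term.

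For the second summand (and, by symmetry, the third) the ratio itself vanishes at the evaluation point because its denominator contains $\zeta_q(1-\alpha+\gamma)|_{\gamma=\alpha}=\zeta_q(1)$, whose reciprocal is zero. Using the identity $\partial_\alpha[f(\alpha,\gamma)/\zeta_q(1-\alpha+\gamma)]|_{\alpha=\gamma=r}=-(\log q)\,f(r,r)$ recalled just before the statement, only the branch in which $\partial_\alpha$ hits $1/\zeta_q(1-\alpha+\gamma)$ survives, contributing $-\log q$ times the regular remainder evaluated at $\gamma=\alpha$; the $\partial_\beta$ must then act on this remainder. After the zeta cancellations at $\delta=\beta$, the remainder reduces to exactly $q^{-2g\alpha}A_2(\alpha)$, and its logarithmic derivative in $\beta$ produces $-(\log q)\,T_2(\alpha,\beta)$; the four pieces of $T_2$ come, respectively, from the $\beta$-dependence of $\zeta_q(1+\alpha+\beta)$, $\zeta_q(1-\alpha+\beta)$, $\zeta_q(1+2\beta)$ in the zeta-ratio and from $\partial_b A(-\alpha,b,\alpha,\beta)/A(-\alpha,\beta,\alpha,\beta)|_{b=\beta}$ in the Euler product. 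The explicit prime-sum representation of $T_2$ then follows from local logarithmic differentiation at each prime.

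For the fourth summand, both $1/\zeta_q(1-\alpha+\gamma)$ and $1/\zeta_q(1-\beta+\delta)$ vanish at the evaluation point, so the identity has to be applied twice; this yields $q^{-2g(\alpha+\beta)}(\log q)^2$ multiplied by the regular remainder, which after the surviving zeta cancellations equals the Euler product $A(\alpha,\beta)$ times the displayed ratio of zetas. The main technical obstacle throughout is not the structure of the argument but the bookkeeping used to identify $B(\alpha)$, $C(\alpha,\beta)$ and the explicit prime-sum form of $T_2$: the twelve-term bracket $H_P$ together with the six rational prefactors of each local factor of $A$ must be differentiated and reorganised so that the characteristic $(|P|+1)^{-1}$ denominators emerge. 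This step is purely mechanical but requires some care to arrange so that the first-order terms in $\partial_\alpha\log A$ and the second-order corrections combine correctly into $B$ and $C$.
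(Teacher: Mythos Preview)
Your proposal is correct and follows exactly the same approach as the paper: differentiate the Ratios Conjecture expression with respect to $\alpha$ and $\beta$, then set $\gamma=\alpha$, $\delta=\beta$, using the pole identity $\partial_\alpha[f(\alpha,\gamma)/F(1-\alpha+\gamma)]|_{\alpha=\gamma=r}=-f(r,r)/r_F$ (with $r_{\zeta_q}=1/\log q$) to handle the oscillatory summands. The paper records this derivation in a single sentence, whereas you have spelled out the Leibniz bookkeeping for each of the four summands and the collapse $A(\alpha,\beta,\alpha,\beta)=1$; there is no substantive difference.
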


Equivalently we have

\begin{conjecture}\label{RCF2}
We have
\begin{align*}
&\frac{1}{|\mathcal{H}_{2g+1}|}\sum_{D\in\mathcal{H}_{2g+1}}uv\frac{\mathcal{L}'}{\mathcal{L}}(u,\chi_D)\frac{\mathcal{L}'}{\mathcal{L}}(v,\chi_D)\\
&\qquad=u^2v^2\frac{\mathcal{Z}'}{\mathcal{Z}}(u^2)\frac{\mathcal{Z}'}{\mathcal{Z}}(v^2)+u^2v^2\bigg(\frac{\mathcal{Z}'}{\mathcal{Z}}\bigg)'(uv)+\mathcal{B}(u)v^2\frac{\mathcal{Z}'}{\mathcal{Z}}(v^2)+\mathcal{B}(v)u^2\frac{\mathcal{Z}'}{\mathcal{Z}}(u^2)+\mathcal{C}(u,v)\\
&\qquad\qquad+(qu^2)^g\mathcal{A}_2(u)\mathcal{T}_2(u,v)+(qv^2)^g\mathcal{A}_2(v)\mathcal{T}_2(v,u)\\
&\qquad\qquad+(quv)^{2g}\mathcal{A}(u,v)\frac{\mathcal{Z}\big(\frac{1}{q^2u^2}\big)\mathcal{Z}\big(\frac{1}{q^2v^2}\big)\mathcal{Z}\big(\frac{1}{q^2uv}\big)\mathcal{Z}(uv)}{\mathcal{Z}\big(\frac{v}{qu}\big)\mathcal{Z}\big(\frac{u}{qv}\big)}+O_\varepsilon\big(q^{-g+\varepsilon g}\big),
\end{align*}
where 
\begin{align*}
\mathcal{A}_2(u)&=\frac{qu^2}{qu^2-1}+\frac{1}{q-1}=\frac{1}{qu^2-1}+\frac{q}{q-1},
\end{align*}
\begin{align*}
\mathcal{A}(u,v)&=\prod_{P}\bigg(1+\frac{1}{|P|}\bigg)^{-1}\bigg(1-\frac{1}{|P|}\bigg)^{-2}\bigg(1-\frac{1}{|P|^{2}(uv)^{d(P)}}\bigg)\Big(1-(uv)^{d(P)}\Big)\\
&\qquad \bigg(1-\frac{v^{d(P)}}{|P|u^{d(P)}}\bigg)^{-1}\bigg(1-\frac{u^{d(P)}}{|P|v^{d(P)}}\bigg)^{-1}\bigg(1-\frac{1}{|P|}+\frac{1}{|P|^{2}(uv)^{d(P)}}+(uv)^{d(P)}\\
&\qquad\qquad-\frac{v^{d(P)}}{|P|u^{d(P)}}-\frac{u^{d(P)}}{|P|v^{d(P)}}-\frac{1}{|P|^{3}u^{2d(P)}}-\frac{1}{|P|^{3}v^{2d(P)}}+\frac{1}{|P|^{2}}+\frac{1}{|P|^{5}(uv)^{2d(P)}}\bigg),
\end{align*}
$\mathcal{B}(u)$ is defined in \eqref{mathcalBr}, 
\begin{align*}
\mathcal{C}(u,v)&=\mathcal{B}(u)\mathcal{B}(v)+\sum_{P}\frac{d(P)^2(uv)^{d(P)}(u^{d(P)}-v^{d(P)})^2}{(1-u^{2d(P)})(1-v^{2d(P)})(1-(uv)^{d(P)})^2}\\
&\qquad-\sum_{P}\frac{d(P)^2(uv)^{d(P)}}{(1-u^{2d(P)})(1-v^{2d(P)})(|P|+1)}+\sum_{P}\frac{d(P)^2|P|(uv)^{2d(P)}}{(1-u^{2d(P)})(1-v^{2d(P)})(|P|+1)^2}
\end{align*}
and
\begin{align*}
\mathcal{T}_2(u,v)&=\sum_{P}\frac{d(P)(|P|^{3}(uv)^{2d(P)}-|P|^{2}(uv)^{2d(P)}-|P|^{3}u^{3d(P)}v^{d(P)}+|P|^{2}(uv)^{d(P)})}{(|P|^{3}u^{2d(P)}-1)(1-v^{2d(P)})}.
\end{align*}
\end{conjecture}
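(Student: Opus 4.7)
The plan is to derive Conjecture~\ref{RCF2} from the preceding conjecture by the change of variables $u = q^{-1/2-\alpha}$, $v = q^{-1/2-\beta}$ and division of both sides by $(\log q)^2$. The key translations come from $\mathcal{L}(u,\chi_D) = L(s,\chi_D)$ and $\mathcal{Z}(u) = \zeta_q(s)$ with $u = q^{-s}$: the chain rule gives $\frac{L'}{L}(s,\chi_D) = -(\log q)\,u\,\frac{\mathcal{L}'}{\mathcal{L}}(u,\chi_D)$ and $\frac{\zeta_q'}{\zeta_q}(s) = -(\log q)\,q^{-s}\frac{\mathcal{Z}'}{\mathcal{Z}}(q^{-s})$. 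Multiplying two such $L'/L$ factors and dividing by $(\log q)^2$ reproduces the left-hand side of Conjecture~\ref{RCF2}, while the analogous identity for $\zeta_q$ converts the diagonal $\zeta_q'/\zeta_q$-product, and a second application of the chain rule converts $(\zeta_q'/\zeta_q)'(1+\alpha+\beta)$ into an expression in $uv$.

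For the right-hand side, I would translate each term systematically. The global shift factors become $q^{-2g\alpha} = (qu^2)^g$ and $q^{-2g(\alpha+\beta)} = (quv)^{2g}$. The Euler products $A(\alpha,\beta,\gamma,\delta)$, $B(\alpha)$, $C(\alpha,\beta)$, and $T_2(\alpha,\beta)$, together with the closed-form $A_2(\alpha)$, pass to their calligraphic counterparts via the prime-level identity $|P|^{-1-2\alpha} = u^{2d(P)}$; in particular, $\mathcal{A}_2(u) = qu^2/(qu^2-1) + 1/(q-1)$ follows from $qu^2 = q^{-2\alpha}$. The four zeta-function factors in the final Type-II line combine, after substitution, into the displayed product involving $\mathcal{Z}(1/(q^2u^2))$, $\mathcal{Z}(1/(q^2v^2))$, $\mathcal{Z}(1/(q^2uv))$, $\mathcal{Z}(uv)$, $\mathcal{Z}(v/(qu))$, and $\mathcal{Z}(u/(qv))$, which follow from identities such as $\zeta_q(1-2\alpha) = \mathcal{Z}(1/(q^2u^2))$ and $\zeta_q(1-\alpha+\beta) = \mathcal{Z}(v/(qu))$.

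The main obstacle is the regrouping of the mixed second-order contributions. The chain rule applied to $(\zeta_q'/\zeta_q)'(1+\alpha+\beta)$ generates a piece $uv\,\frac{\mathcal{Z}'}{\mathcal{Z}}(uv)$ in addition to the expected $u^2v^2(\mathcal{Z}'/\mathcal{Z})'(uv)$, and the products $B(\alpha)(\zeta_q'/\zeta_q)(1+2\beta)$ produce further weighted pieces; verifying that these combine with the sums over primes defining $C(\alpha,\beta)$ to yield precisely the compact form $\mathcal{C}(u,v)$ displayed in Conjecture~\ref{RCF2} is careful but routine sign and power bookkeeping. The most tedious individual sub-check is confirming the prime-by-prime equality of $A(-\alpha,-\beta,\alpha,\beta)$ and $\mathcal{A}(u,v)$ by direct expansion of both Euler products, using $u^{d(P)} = |P|^{-1/2-\alpha}$ and $v^{d(P)} = |P|^{-1/2-\beta}$, and the analogous check for $T_2$ versus $\mathcal{T}_2$.
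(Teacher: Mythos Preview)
Your approach is correct and is essentially the same as the paper's: the paper derives the previous conjecture in the $\alpha,\beta$ variables from the Ratios recipe, then simply writes ``Equivalently we have'' and restates it as Conjecture~\ref{RCF2} in the $u,v$ variables. Your proposed change of variables $u=q^{-1/2-\alpha}$, $v=q^{-1/2-\beta}$ together with the chain-rule identities $\tfrac{L'}{L}(s,\chi_D)=-(\log q)\,u\,\tfrac{\mathcal{L}'}{\mathcal{L}}(u,\chi_D)$ and $\tfrac{\zeta_q'}{\zeta_q}(s)=-(\log q)\,q^{-s}\tfrac{\mathcal{Z}'}{\mathcal{Z}}(q^{-s})$ is exactly what that word ``equivalently'' encodes, and your term-by-term translation of $A_2$, $A(-\alpha,-\beta,\alpha,\beta)$, $B$, $C$, $T_2$ and the zeta ratios is precisely the check the paper omits.

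Your caution about the extra lower-order piece $uv\,\tfrac{\mathcal{Z}'}{\mathcal{Z}}(uv)$ arising from the second chain rule applied to $(\zeta_q'/\zeta_q)'(1+\alpha+\beta)$ is well placed; in the paper this is absorbed tacitly (cf.\ the identity quoted just before \eqref{Joo}, which is really the statement $u\tfrac{\mathcal{Z}'}{\mathcal{Z}}(u)+u^2(\tfrac{\mathcal{Z}'}{\mathcal{Z}})'(u)=\sum_P d(P)^2 u^{d(P)}/(1-u^{d(P)})^2$), so the paper is using the same regrouping you anticipate.
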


\subsection{The two level density}

Consider
\begin{equation}\label{2level}
I_2(N;\alpha,\beta)=\frac{1}{|\mathcal{H}_{2g+1}|}\sum_{D\in\mathcal{H}_{2g+1}}\sum_{\substack{f_1,f_2\in\mathcal{M}\\d(f_1f_2)\leq N}}\frac{\Lambda(f_1)\Lambda(f_2)\chi_D(f_1f_2)}{|f|_1^{1/2+\alpha}|f|_2^{1/2+\beta}}.
\end{equation}

Using the Perron formula \eqref{Perron} this is equal to
\begin{align*}
&\frac{1}{|\mathcal{H}_{2g+1}|}\sum_{D\in\mathcal{H}_{2g+1}}\frac{1}{2\pi i}\oint_{|u|=r}\sum_{f_1,f_2\in\mathcal{M}}\frac{\Lambda(f_1)\Lambda(f_2)\chi_D(f_1f_2)u^{d(f_1)+d(f_2)}}{|f|_1^{1/2+\alpha}|f|_2^{1/2+\beta}}\frac{du}{u^{N+1}(1-u)}\\
&\qquad=\frac{1}{|\mathcal{H}_{2g+1}|}\sum_{D\in\mathcal{H}_{2g+1}}\frac{1}{2\pi i}\oint_{|u|=r} \frac{u^2}{q^{1+\alpha+\beta}} \frac{ \mathcal{L}'}{\mathcal{L}} \Big(\frac{u}{q^{1/2+\alpha}},\chi_D \Big)  \frac{ \mathcal{L}'}{\mathcal{L}} \Big(\frac{u}{q^{1/2+\beta}},\chi_D \Big)\frac{du}{u^{N+1}(1-u)}
\end{align*}
for any $r<q^{-1/2-\varepsilon}$.  We enlarge to contour to $|u|=r=q^{-\varepsilon}$.
In view of Conjecture \ref{RCF2} we write
\begin{align}
I_2(N;\alpha,\beta)=\frac{1}{2\pi i}\oint_{|u|=r}\sum_{j=1}^{4}R_j(u,\alpha,\beta)\,\frac{du}{u^{N+1}(1-u)}+O_\varepsilon\big(q^{-g+\varepsilon g}\big).
\label{int_density}
\end{align}
The terms coming from the first parts of the approximate functional equations, $R_1(u,\alpha,\beta)$, correspond to the diagonal terms, while the terms coming from only $1$ swap in the approximate functional equations, $R_2(u,\alpha,\beta)$ and $R_3(u,\alpha,\beta)$, correspond to the Type-I terms. Type-II terms are the terms with $2$ swaps, $R_4(u,\alpha,\beta)$.

For the $0$ swap terms we have
\begin{align}\label{type02level}
R_1(u,\alpha,\beta)&=\frac{u^4}{q^{2(1+\alpha+\beta)}}\frac{\mathcal{Z}'}{\mathcal{Z}} \Big(\frac{u^2}{q^{1+2\alpha}}\Big)\frac{\mathcal{Z}'}{\mathcal{Z}}\Big(\frac{u^2}{q^{1+2\beta}}\Big)+\frac{u^4}{q^{2(1+\alpha+\beta)}}\bigg(\frac{\mathcal{Z}'}{\mathcal{Z}}\bigg)'\Big(\frac{u^2}{q^{1+\alpha+\beta}}\Big)\\
&\qquad\qquad+\mathcal{B}(u,\alpha) \frac{u^2}{q^{1+2\beta}}\frac{\mathcal{Z}'}{\mathcal{Z}}\Big(\frac{u^2}{q^{1+2\beta}}\Big)+\mathcal{B}(u,\beta)\frac{u^2}{q^{1+2\alpha}}\frac{\mathcal{Z}'}{\mathcal{Z}}\Big(\frac{u^2}{q^{1+2\alpha}}\Big)+\mathcal{C}(u,\alpha,\beta),\nonumber
\end{align}
where
\begin{align}\label{type02level1}
\mathcal{C}(u,\alpha,\beta)&:=\mathcal{C}\Big(\frac{u}{q^{1/2+\alpha}},\frac{u}{q^{1/2+\beta}}\Big)\nonumber\\
&=\mathcal{B}(u,\alpha)\mathcal{B}(u,\beta)+\sum_{P}\frac{d(P)^2|P|^{2+\alpha+\beta}u^{4d(P)}(|P|^\alpha-|P|^\beta)^2}{(|P|^{1+2\alpha}-u^{2d(P)})(|P|^{1+2\beta}-u^{2d(P)})(|P|^{1+\alpha+\beta}-u^{2d(P)})^2}\nonumber\\
&\qquad\qquad-\sum_{P}\frac{d(P)^2|P|^{1+\alpha+\beta}u^{2d(P)}}{(|P|^{1+2\alpha}-u^{2d(P)})(|P|^{1+2\beta}-u^{2d(P)})(|P|+1)}\\
&\qquad\qquad+\sum_{P}\frac{d(P)^2|P|u^{4d(P)}}{(|P|^{1+2\alpha}-u^{2d(P)})(|P|^{1+2\beta}-u^{2d(P)})(|P|+1)^2}.\nonumber
\end{align}

Concerning the $1$ swap terms we have
\begin{align} \label{r2}
R_2(u,\alpha,\beta)+R_3(u,\alpha,\beta)=q^{-2g\alpha}u^{2g}\mathcal{A}_2(u,\alpha)\mathcal{T}_2(u,\alpha,\beta)+q^{-2g\beta}u^{2g}\mathcal{A}_2(u,\beta)\mathcal{T}_2(u,\beta,\alpha),
\end{align}
where
\begin{align}\label{A22}
\mathcal{A}_2(u,\alpha)&=\mathcal{A}_2\Big(\frac{u}{q^{1/2+\alpha}}\Big)\nonumber\\
&=\frac{u^2}{u^2-q^{2\alpha}}+\frac{1}{q-1}=\frac{q^{2\alpha}}{u^2-q^{2\alpha}}+\frac{q}{q-1}
\end{align}
and
\begin{align*}
\mathcal{T}_2(u,\alpha,\beta)&=\mathcal{T}_2\Big(\frac{u}{q^{1/2+\alpha}},\frac{u}{q^{1/2+\beta}}\Big)\nonumber\\
&=\sum_{P} \frac{u^{2d(P)}(|P|^{2(1-\alpha)}u^{2d(P)}-|P|^{1-2\alpha}u^{2d(P)}-|P|^{2-3\alpha+\beta}u^{2d(P)}+|P|^{2-\alpha+\beta})}{(|P|^{2(1-\alpha)}u^{2d(P)}-1)(|P|^{1+2\beta}-u^{2d(P)})}.
\end{align*}

Note that $1$ swap terms kick in once $N \geq 2g$. In the computation of Type-I terms in section \ref{type1} we also assume that $N  <4g$. We write $\mathcal{T}_2(u,\alpha,\beta)$ as a sum of four terms. For the first three terms, we claim that we can truncate the sum over $P$ to those primes $P$ with $d(P)<g$; otherwise the corresponding integrals in equation \eqref{int_density} will be equal to zero. Indeed, in order for the integrals to be non-vanishing, we need $2g+2d(P)=N$. Since $N<4g$ it follows that $d(P)<g$. We write the fourth term in the expression of $\mathcal{T}_2(u,\alpha,\beta)$ as
\begin{align}
 \sum_P  & \frac{u^{2d(P)} |P|^{2-\alpha+\beta}}{(|P|^{2(1-\alpha)}u^{2d(P)}-1)(|P|^{1+2\beta}-u^{2d(P)})} = \sum_{d(P) <g} \frac{u^{2d(P)} |P|^{2-\alpha+\beta}}{(|P|^{2(1-\alpha)}u^{2d(P)}-1)(|P|^{1+2\beta}-u^{2d(P)})} \label{nonconv} \\
 &+ \sum_{d(P) \geq g} \frac{u^{2d(P)} |P|^{2-\alpha+\beta}}{(|P|^{2(1-\alpha)}u^{2d(P)}-1)(|P|^{1+2\beta}-u^{2d(P)})} \nonumber \\
 &= \sum_{d(P) <g} \frac{u^{2d(P)} |P|^{2-\alpha+\beta}}{(|P|^{2(1-\alpha)}u^{2d(P)}-1)(|P|^{1+2\beta}-u^{2d(P)})} +  \sum_{d(P) \geq g} \frac{1}{|P|^{1+\beta-\alpha}} \nonumber \\
 & + \sum_{d(P) \geq g} \frac{|P|^{2(1-\alpha)} u^{4d(P)}+|P|^{1+2\beta}+u^{2d(P)}}{|P|^{1+\beta-\alpha} (|P|^{2(1-\alpha)}u^{2d(P)}-1)(|P|^{1+2\beta}-u^{2d(P)})} \nonumber \\
 &= \sum_{d(P) <g} \frac{u^{2d(P)} |P|^{2-\alpha+\beta}}{(|P|^{2(1-\alpha)}u^{2d(P)}-1)(|P|^{1+2\beta}-u^{2d(P)})} +  \sum_{d(P) \geq  g} \frac{1}{|P|^{1+\beta-\alpha}}+O(q^{-g}). \nonumber 
 \end{align}
We use the Prime Polynomial Theorem for the sum over $d(P)\geq g$ above and without worrying abut convergence issues since the recipe is a heuristic argument, we replace it by what we get by summing the geometric series. 
Then when $N<4g$ we rewrite
\begin{align}
 \mathcal{T}_2(u,\alpha,\beta) &= \sum_{d(P)<g} \frac{u^{2d(P)}(|P|^{2(1-\alpha)}u^{2d(P)}-|P|^{1-2\alpha}u^{2d(P)}-|P|^{2-3\alpha+\beta}u^{2d(P)}+|P|^{2-\alpha+\beta})}{(|P|^{2(1-\alpha)}u^{2d(P)}-1)(|P|^{1+2\beta}-u^{2d(P)})} \nonumber  \\
 &+ q^{g(-\beta+\alpha)} \frac{1}{q^{\alpha-\beta}-1}.
 \label{t2}
 \end{align}
We remark that although the term in the second line above gives a term involving $q^{-g(\alpha+\beta)}$ in the expression of $R_2(u,\alpha,\beta)$, when we put all the terms together, the contributions of this type will cancel out.

For the $2$ swaps terms we have
\begin{align*}
R_4(u,\alpha,\beta) = q^{-2g(\alpha+\beta)} u^{4g} \mathcal{A} \Big( \frac{u}{q^{1/2+\alpha}}, \frac{u}{q^{1/2+\beta}}\Big) \frac{ \mathcal{Z}( \frac{1}{q^{1-2 \alpha}u^2}) \mathcal{Z}( \frac{1}{q^{1-2 \beta}u^2} ) \mathcal{Z}( \frac{1}{q^{1- \alpha-\beta}u^2} ) \mathcal{Z}( \frac{u^2}{q^{1+\alpha+\beta}} )}{\mathcal{Z} (\frac{1}{q^{1-\alpha+\beta}}) \mathcal{Z}(\frac{1}{q^{1+\alpha-\beta}})}.
\end{align*}
\kommentar{\Hung{I got something slightly different from what you got.}
\acom{I suggest we include the expression for Type-II terms even if we don't actually compute them. Please check that I got this right.
Type-II terms correspond to the integral of $R_4(u,\alpha,\beta)$, where
\begin{align*}
R_4(u,\alpha,\beta) = q^{-2g(\alpha+\beta)} u^{4g} \mathcal{A} \Big( \frac{u}{q^{1/2+\alpha}}, \frac{u}{q^{1/2+\beta}}\Big) \frac{ \mathcal{Z} \Big( \frac{q^{-1+2 \alpha}}{u^2} \Big) \mathcal{Z} \Big( \frac{q^{-1+2 \beta}}{u^2} \Big) \mathcal{Z} \Big( \frac{q^{-1+ \alpha+\beta}}{u^2} \Big) \mathcal{Z} \Big( \frac{u^2}{q^{1+\alpha+\beta}} \Big)}{\mathcal{Z} (q^{-1+\alpha+\beta}) \mathcal{Z}(q^{-1-\alpha-\beta})}.
\end{align*}}
}

\section{The two level density - The diagonal}
\label{2diag}
In this and the following section, we assume that $N<4g$.

The diagonal, denoted by $I_2^0(N;\alpha,\beta)$, comes of the terms with $f_1f_2=\square$ in \eqref{2level}. From Lemma \ref{L4} and the Perron formula \eqref{Perron} we have
\begin{align}\label{600}
I_2^0(N;\alpha,\beta)&=\sum_{\substack{f_1,f_2\in\mathcal{M}\\d(f_1f_2)\leq N\\ f_1f_2=\square}}\frac{\Lambda(f_1)\Lambda(f_2)}{|f|_1^{1/2+\alpha}|f|_2^{1/2+\beta}}\prod_{P|f_1f_2}\bigg(1-\frac{1}{|P|+1}\bigg)+O_\varepsilon(q^{-2g+\varepsilon g})\nonumber\\
&=\frac{1}{2\pi i}\oint_{|u|=r}J_2^0(u,\alpha,\beta)\frac{du}{u^{N+1}(1-u)}+O_\varepsilon(q^{-2g+\varepsilon g})
\end{align}
for any $r<q^{-1/2-\varepsilon}$, where
\[
J_2^0(u,\alpha,\beta)=\sum_{\substack{f_1,f_2\in\mathcal{M}\\f_1f_2=\square}}\frac{\Lambda(f_1)\Lambda(f_2)u^{d(f_1f_2)}}{|f|_1^{1/2+\alpha}|f|_2^{1/2+\beta}}\prod_{P|f_1f_2}\bigg(1-\frac{1}{|P|+1}\bigg).
\]
We write
\[
J_2^0(u,\alpha,\beta)=J_{2}^{0,\textrm{ee}}(u,\alpha,\beta)+J_{2}^{0,\textrm{oo}}(u,\alpha,\beta),
\]
where $J_{2}^{0,\textrm{ee}}(u,\alpha,\beta)$ consists of the terms $f_1=P^{2k}$, $f_2=Q^{2l}$ with $k,l\geq1$, and $J_{2}^{0,\textrm{oo}}(u,\alpha,\beta)$ consists of the terms $f_1=P^{2k+1}$, $f_2=P^{2l+1}$ with $k,l\geq0$.

We have
\begin{align}\label{Jee}
J_{2}^{0,\textrm{ee}}(u,\alpha,\beta)&=\sum_{k,l\geq1}\sum_{P, Q}\frac{d(P)d(Q)u^{2kd(P)+2ld(Q)}}{|P|^{k(1+2\alpha)}|Q|^{l(1+2\beta)}}\bigg(1-\frac{1}{|P|+1}\bigg)\bigg(1-\frac{1}{|Q|+1}\bigg)\nonumber\\
&\qquad\qquad+\sum_{k,l\geq1}\sum_{P}\frac{d(P)^2|P|u^{2(k+l)d(P)}}{|P|^{k(1+2\alpha)+l(1+2\beta)}(|P|+1)^2}\nonumber\\
&=\bigg(\frac{u^2}{q^{1+2\alpha}}\frac{\mathcal{Z}'}{\mathcal{Z}} \Big(\frac{u^2}{q^{1+2\alpha}}\Big)+\mathcal{B}(u,\alpha)\bigg)\bigg(\frac{u^2}{q^{1+2\beta}}\frac{\mathcal{Z}'}{\mathcal{Z}} \Big(\frac{u^2}{q^{1+2\beta}}\Big)+\mathcal{B}(u,\beta)\bigg)\\
&\qquad\qquad+\sum_{P}\frac{d(P)^2|P|u^{4d(P)}}{(|P|^{1+2\alpha}-u^{2d(P)})(|P|^{1+2\beta}-u^{2d(P)})(|P|+1)^2}.\nonumber
\end{align}
On the other hand,
\begin{align*}
J_{2}^{0,\textrm{oo}}(u,\alpha,\beta)&=\sum_{k,l\geq0}\sum_{P}\frac{d(P)^2u^{2(k+l+1)d(P)}}{|P|^{1+\alpha+\beta+k(1+2\alpha)+l(1+2\beta)}}\bigg(1-\frac{1}{|P|+1}\bigg)\\
&=\sum_{P}\frac{d(P)^2|P|^{1+\alpha+\beta}u^{2d(P)}}{(|P|^{1+2\alpha}-u^{2d(P)})(|P|^{1+2\beta}-u^{2d(P)})}\\
&\qquad\qquad-\sum_{P}\frac{d(P)^2|P|^{1+\alpha+\beta}u^{2d(P)}}{(|P|^{1+2\alpha}-u^{2d(P)})(|P|^{1+2\beta}-u^{2d(P)})(|P|+1)}.
\end{align*}
Note that
\[
u^2\bigg(\frac{\mathcal{Z}'}{\mathcal{Z}}\bigg)'(u)=\sum_{P}\frac{d(P)^2u^{d(P)}}{(1-u^{d(P)})^2}.
\]
So
\begin{align}\label{Joo}
&J_{2}^{0,\textrm{oo}}(u,\alpha,\beta)-\frac{u^4}{q^{2(1+\alpha+\beta)}}\bigg(\frac{\mathcal{Z}'}{\mathcal{Z}}\bigg)'\Big(\frac{u^2}{q^{1+\alpha+\beta}}\Big)\nonumber\\
&\qquad\qquad=\sum_{P}\frac{d(P)^2|P|^{2+\alpha+\beta}u^{4d(P)}(|P|^\alpha-|P|^\beta)^2}{(|P|^{1+2\alpha}-u^{2d(P)})(|P|^{1+2\beta}-u^{2d(P)})(|P|^{1+\alpha+\beta}-u^{2d(P)})^2}\\
&\qquad\qquad\qquad\qquad-\sum_{P}\frac{d(P)^2|P|^{1+\alpha+\beta}u^{2d(P)}}{(|P|^{1+2\alpha}-u^{2d(P)})(|P|^{1+2\beta}-u^{2d(P)})(|P|+1)}.\nonumber
\end{align}
We enlarge the contour in \eqref{600} to $|u|=r=q^{-\varepsilon}$. Combining \eqref{Jee} and \eqref{Joo}, and comparing with \eqref{type02level} and \eqref{type02level1} we see that
\[
J_2^0(u,\alpha,\beta)=R_1(u,\alpha,\beta).
\]

\section{The two level density - Type-I terms}
\label{type1}
\subsection{The terms $f_1=P^{2k+1}$, $f_2=Q^{2l}$ with $k\geq0$, $l\geq1$} \label{type11} We denote this contribution by $I_{2}^{\textrm{oe}}(N;\alpha,\beta)$. In this section, we assume $N \geq 2g$. We have
\begin{align*}
I_{2}^{\textrm{oe}}(N;\alpha,\beta)=&\frac{1}{|\mathcal{H}_{2g+1}|}\sum_{\substack{P\ne Q\\d(P^{2k+1}Q^{2l})\leq N}}\frac{d(P)d(Q)}{|P|^{(2k+1)(1/2+\alpha)}|Q|^{l(1+2\beta)}}\sum_{D\in\mathcal{H}_{2g+1}}\chi_D(PQ^2)\\
&\qquad\qquad+\frac{1}{|\mathcal{H}_{2g+1}|}\sum_{d(P^{2k+2l+1})\leq N}\frac{d(P)^2}{|P|^{(2k+1)(1/2+\alpha)+l(1+2\beta)}}\sum_{D\in\mathcal{H}_{2g+1}}\chi_D(P).
\end{align*}
By the Polya-Vinogradov inequality in Lemma \ref{pv}, the second term is $O(N^2q^{-2g})$. We now consider the first term with $P\ne Q$. The same argument also shows the terms with $k\geq1$ are bounded by the same error term, and the contribution of the terms with $d(P)=n$ is $O(Nq^{n-2g})$. 
So 
\begin{align*}
I_{2}^{\textrm{oe}}(N;\alpha,\beta)=&\frac{1}{|\mathcal{H}_{2g+1}|}\sum_{\substack{d(PQ^{2l})\leq N\\d(P)\geq g+1}}\frac{d(P)d(Q)}{|P|^{1/2+\alpha}|Q|^{l(1+2\beta)}}\sum_{D\in\mathcal{H}_{2g+1}}\chi_D(PQ^2)+O(Nq^{-g}).
\end{align*}

Applying Lemma \ref{L1} and since $d(P) \geq g+1$, we have
\begin{align}\label{P&Q}
 \sum_{D \in \mathcal{H}_{2g+1}} \chi_D(PQ^2) &= \sum_{j\geq 0} \bigg(\sum_{h \in \mathcal{M}_{2g+1-2jd(Q)}} \chi_{PQ^2}(h)  - q \sum_{h \in \mathcal{M}_{2g-1-2jd(Q)}} \chi_{PQ^2}(h)\bigg).
 \end{align}
If $d(P)$ is odd, then using Lemmas \ref{L2} and \ref{L3} it follows that the term in parenthesis is equal to
\begin{align*}
&\frac{q^{2g+3/2}}{|P|^{1/2}|Q|^{2j+2}}\sum_{d(V)= d(P)+(2j+2)d(Q)-2g-2}\chi_P(V)G(V,Q^2)\\
&\qquad\qquad-\frac{q^{2g+1/2}}{|P|^{1/2}|Q|^{2j+2}}\sum_{d(V)= d(P)+(2j+2)d(Q)-2g}\chi_P(V)G(V,Q^2).
\end{align*}
As $V$ cannot be a square in the sums, by Lemma \ref{sumprimes}, the contribution of these terms to $I_{2}^{\textrm{oe}}(N;\alpha,\beta)$ is $O(N^2q^{N/2-2g})$.

Now consider the case $d(P)$ is even. Applying Lemmas \ref{L2} and \ref{L3}, the first sum over $h$ in \eqref{P&Q} is
\begin{align*}
&\frac{q^{2g+1}}{|P||Q|^{2j+2}}\bigg(q\sum_{V\in\mathcal{M}_{\leq d(P)+(2j+2)d(Q)-2g-3}} G(V,P)G(V,Q^2)\\
&\qquad\qquad\qquad\qquad\qquad\qquad\qquad\qquad- \sum_{V\in\mathcal{M}_{\leq d(P)+(2j+2)d(Q)-2g-2}}G(V,P)G(V,Q^2) \bigg)\\
 &\qquad =-\frac{q^{2g+1}\chi_P(Q)}{|P|^{1/2}|Q|^{2j+1}}\bigg(q\sum_{\substack{V\in\mathcal{M}_{\leq d(P)+(2j+1)d(Q)-2g-3}\\(V,Q)=1}} \chi_P(V)- \sum_{\substack{V\in\mathcal{M}_{\leq d(P)+(2j+1)d(Q)-2g-2}\\(V,Q)=1}}\chi_P(V)\bigg)\\
&\qquad\qquad\qquad+\frac{q^{2g+1}\varphi(Q^2)}{|P|^{1/2}|Q|^{2j+2}}\bigg(q\sum_{V\in\mathcal{M}_{\leq d(P)+2jd(Q)-2g-3}} \chi_P(V)- \sum_{V\in\mathcal{M}_{\leq d(P)+2jd(Q)-2g-2}}\chi_P(V)\bigg).
\end{align*} 
As above, the contribution of the first term and that of $V\ne\square$ in the second term to $I_{2}^{\textrm{oe}}(N;\alpha,\beta)$ is bounded by $O(N^2q^{N/2-2g})$.  We are thus left with $V=\square$ in the second term above, which is equal to
\begin{align*}
&\frac{q^{2g+1}\varphi(Q^2)}{|P|^{1/2}|Q|^{2j+2}}\bigg(q\sum_{\substack{V\in\mathcal{M}_{\leq d(P)/2+jd(Q)-g-2}}} 1- \sum_{\substack{V\in\mathcal{M}_{\leq d(P)/2+jd(Q)-g-1}}}1\bigg)\\
&\qquad=\begin{cases}
-\frac{q^{2g+1}\varphi(Q^2)}{|P|^{1/2}|Q|^{2j+2}} & \quad\textrm{if } d(P)+2jd(Q)>2g,\\
0 & \quad\textrm{otherwise}.
\end{cases}
\end{align*}

The same argument applies to the second sum over $h$ in \eqref{P&Q}, and hence we obtain
\begin{align*}
&I_{2}^{\textrm{oe}}(N;\alpha,\beta)=-\sum_{\substack{d(PQ^{2l})\leq N\\d(P)\, \textrm{even}\,\geq g+1}}\,\sum_{\substack{d(P)+2jd(Q)> 2g}}\frac{d(P)d(Q)\varphi(Q^2)}{|P|^{1+\alpha}|Q|^{l(1+2\beta)+2j+2}}\\
&\qquad+\frac{1}{q-1}\sum_{\substack{d(PQ^{2l})\leq N\\d(P)\geq g+1}}\,\,\sum_{d(P)+2jd(Q)=2g}\frac{d(P)d(Q)\varphi(Q^2)}{|P|^{1+\alpha}|Q|^{l(1+2\beta)+2j+2}}+O(N^2q^{N/2-2g})+O(Nq^{-g}).
\end{align*}
By the Prime Polynomial Theorem, the condition $d(P)\geq g+1$ can be removed at the cost of an error of size $O(gq^{-g/2})$. The same argument also implies that we can restrict the sum to $jd(Q)<g$. So
\begin{align}
I_{2}^{\textrm{oe}}(N;\alpha,\beta) =\frac{1}{2\pi i}\oint_{|u|=r}J_{2}^{\textrm{oe}}(u,\alpha,\beta)\,\frac{du}{u^{N+1}(1-u)}+O(N^2q^{N/2-2g})+O(gq^{-g/2})
\label{i2}
\end{align}
for any $r<q^{-\varepsilon}$, where
\begin{align*}
J_{2}^{\textrm{oe}}(u,\alpha,\beta)&=-\sum_{l\geq1}\sum_{d(P)\, \textrm{even}}\,\sum_{\substack{d(P)+2jd(Q)>2g\\jd(Q)<g}}\frac{d(P)d(Q)\varphi(Q^2)u^{d(P)+2ld(Q)}}{|P|^{1+\alpha}|Q|^{l(1+2\beta)+2j+2}}\\
&\qquad\qquad+\frac{1}{q-1}\sum_{l\geq1}\,\sum_{\substack{d(P)+2jd(Q)=2g}}\frac{d(P)d(Q)\varphi(Q^2)u^{d(P)+2ld(Q)}}{|P|^{1+\alpha}|Q|^{l(1+2\beta)+2j+2}}\\
&=-\sum_{d(P)\, \textrm{even}}\,\sum_{\substack{d(P)+2jd(Q)>2g\\jd(Q)<g}}\frac{d(P)d(Q)\varphi(Q^2)u^{d(P)+2d(Q)}}{|P|^{1+\alpha}|Q|^{2j+2}(|Q|^{1+2\beta}-u^{2d(Q)})}\\
&\qquad\qquad+\frac{1}{q-1}\,\sum_{\substack{d(P)+2jd(Q)=2g}}\frac{d(P)d(Q)\varphi(Q^2)u^{d(P)+2d(Q)}}{|P|^{1+\alpha}|Q|^{2j+2}(|Q|^{1+2\beta}-u^{2d(Q)})}.
\end{align*}

From the Prime Polynomial Theorem we have
\begin{align*}
\sum_{d(P)\, \textrm{even}\,>2g-2jd(Q)}\frac{d(P)u^{d(P)}}{|P|^{1+\alpha}}&=\sum_{n>g-jd(Q)}\frac{u^{2n}}{q^{2n\alpha}}\big(1+O(q^{-n})\big)\\
&=-q^{-2g\alpha}u^{2g-2jd(Q)}|Q|^{2j\alpha}\frac{u^{2}}{u^2-q^{2\alpha}}+O(q^{-g}|Q|^j).
\end{align*}
\kommentar{\acom{For the error term to have that size I think we need to assume $\alpha>0$.}\Hung{We shall assume $\alpha\ll 1/g$, and I think the above error term is fine in that case.}}
Hence, using \eqref{A22}, we get
\begin{align}
J_{2}^{\textrm{oe}}(u,\alpha,\beta)&=q^{-2g\alpha}u^{2g}\mathcal{A}_2(u,\alpha)\sum_{j\geq0}\sum_{Q\in\mathcal{P}}\frac{d(Q)\varphi(Q^2)u^{-2jd(Q)+2d(Q)}}{|Q|^{2j(1-\alpha)+2}(|Q|^{1+2\beta}-u^{2d(Q)})}+O(gq^{-g})\nonumber\\
&=q^{-2g\alpha}u^{2g}\mathcal{A}_2(u,\alpha)\sum_{Q\in\mathcal{P}}\frac{d(Q)(|Q|^{2(1-\alpha)}-|Q|^{1-2\alpha})u^{4d(Q)}}{(|Q|^{2(1-\alpha)}u^{2d(Q)}-1)(|Q|^{1+2\beta}-u^{2d(Q)})}+O(gq^{-g}),
\end{align}
where in the first line we have removed the condition $jd(Q)<g$ with an admissible error.
Note that we can truncate the sum over $Q$ above to $d(Q)<g$ using a similar argument as in section \ref{2rc}. Indeed, when $d(Q) \geq g$ the corresponding term in integral \eqref{i2} will be equal to zero since there will be no poles inside the contour of integration. Then we rewrite
\begin{equation} \label{factno1}
J_{2}^{\textrm{oe}}(u,\alpha,\beta)= q^{-2g\alpha}u^{2g}\mathcal{A}_2(u,\alpha)\sum_{d(Q)<g}\frac{d(Q)(|Q|^{2(1-\alpha)}-|Q|^{1-2\alpha})u^{4d(Q)}}{(|Q|^{2(1-\alpha)}u^{2d(Q)}-1)(|Q|^{1+2\beta}-u^{2d(Q)})}+O(gq^{-g}).
\end{equation}

\subsection{The terms $f_1=P^{2k+1}$, $f_2=Q^{2l+1}$ with $P\ne Q$ and $k,l\geq0$} \label{type12}
We denote
\begin{align*}
&\frac{1}{|\mathcal{H}_{2g+1}|}\sum_{\substack{P\ne Q\\d(P^{2k+1}Q^{2l+1})\leq N}}\frac{d(P)d(Q)}{|P|^{(2k+1)(1/2+\alpha)}|Q|^{(2l+1)(1/2+\beta)}}\sum_{D\in\mathcal{H}_{2g+1}}\chi_D(PQ)\\
&\qquad\qquad=I_{2,>}^{\textrm{oo}}(N;\alpha,\beta)+I_{2,<}^{\textrm{oo}}(N;\alpha,\beta)+I_{2,=}^{\textrm{oo}}(N;\alpha,\beta),
\end{align*}
 corresponding to the terms with $d(P)> d(Q)$, $d(P)<d(Q)$ and  $d(P)=d(Q)$, respectively. 

Applying Lemma \ref{L1} we have
\begin{align}\label{condition}
 \sum_{D \in \mathcal{H}_{2g+1}} \chi_D(PQ) &= \sum_{i,j\geq0} \bigg(\sum_{h \in \mathcal{M}_{2g+1-2id(P)-2jd(Q)}} \chi_{PQ}(h)  - q \sum_{h \in \mathcal{M}_{2g-1-2id(P)-2jd(Q)}} \chi_{PQ}(h)\bigg).
 \end{align}
 As in the previous subsection, the terms with $d(PQ)$ odd shall lead to $V\ne\square$ after applying Lemma \ref{L2}, and their contribution, as before, is bounded by $O(N^2q^{N/2-2g})$. We are left with the terms with $d(PQ)$ even. 
From Lemmas \ref{L2} and \ref{L3}, the expression inside the bracket is equal to
\begin{align*}
&\frac{q^{2g+1}}{|P|^{2i+1/2}|Q|^{2j+1/2}}\bigg(q\sum_{V \in \mathcal{M}_{\leq d(PQ)+2id(P)+2jd(Q)-2g-3}} \chi_{PQ}(V)- \sum_{V \in \mathcal{M}_{\leq d(PQ)+2id(P)+2jd(Q)-2g-2}} \chi_{PQ}(V) \bigg)\\
 &\ -\frac{q^{2g}}{|P|^{2i+1/2}|Q|^{2j+1/2}}\bigg(q\sum_{V \in \mathcal{M}_{\leq d(PQ)+2id(P)+2jd(Q)-2g-1}} \chi_{PQ}(V)- \sum_{V \in \mathcal{M}_{\leq d(PQ)+2id(P)+2jd(Q)-2g}} \chi_{PQ}(V) \bigg).
\end{align*} 
Again the contribution from the terms $V\ne\square$ is negligible and we focus on the term with $V=\square$, which is
\begin{align}\label{500}
&\frac{q^{2g+1}}{|P|^{2i+1/2}|Q|^{2j+1/2}}\bigg(q\sum_{\substack{V \in \mathcal{M}_{\leq d(PQ)/2+id(P)+jd(Q)-g-2}\\(V,PQ)=1}}1- \sum_{\substack{V \in \mathcal{M}_{\leq d(PQ)/2+id(P)+jd(Q)-g-1}\\(V,PQ)=1}} 1\bigg)\\
 &\qquad-\frac{q^{2g}}{|P|^{2i+1/2}|Q|^{2j+1/2}}\bigg(q\sum_{\substack{V \in \mathcal{M}_{\leq d(PQ)/2+id(P)+jd(Q)-g-1}\\(V,PQ)=1}}1- \sum_{\substack{V \in \mathcal{M}_{\leq d(PQ)/2+id(P)+jd(Q)-g}\\(V,PQ)=1}} 1 \bigg).\nonumber
\end{align} 

First consider $I_{2,>}^{\textrm{oo}}(N;\alpha,\beta)$. The treatment for $I_{2,<}^{\textrm{oo}}(N;\alpha,\beta)$ is similar. From \eqref{condition} we have $id(P)+jd(Q)\leq g$, so
\[
d(V)\leq d(PQ)/2+id(P)+jd(Q)-g\leq d(PQ)/2<d(P),
\] and hence $(V,P)=1$ automatically. Note that
\begin{align*}
&q\sum_{\substack{V \in \mathcal{M}_{\leq d(PQ)/2+id(P)+jd(Q)-g-1}\\(V,Q)=1}}1- \sum_{\substack{V \in \mathcal{M}_{\leq d(PQ)/2+id(P)+jd(Q)-g}\\(V,Q)=1}} 1\\
&\qquad=\bigg(q\sum_{V \in \mathcal{M}_{\leq d(PQ)/2+id(P)+jd(Q)-g-1}}1- \sum_{V \in \mathcal{M}_{\leq d(PQ)/2+id(P)+jd(Q)-g}} 1\bigg)\\
&\qquad\qquad\qquad-\bigg(q\sum_{V \in \mathcal{M}_{\leq (d(P)-d(Q))/2+id(P)+jd(Q)-g-1}}1- \sum_{V \in \mathcal{M}_{\leq (d(P)-d(Q))/2+id(P)+jd(Q)-g}} 1\bigg)\\
&\qquad=\begin{cases}
-1 & \quad\textrm{if } (2i+1)d(P)+(2j-1)d(Q)<2g\leq (2i+1)d(P)+(2j+1)d(Q),\\
0 & \quad\textrm{otherwise}.
\end{cases}
\end{align*}
So
\begin{align*}
 \eqref{500}=\begin{cases}
-\frac{q^{2g}(q-1)}{|P|^{2i+1/2}|Q|^{2j+1/2}} & \textrm{if } (2i+1)d(P)+(2j-1)d(Q)<2g< (2i+1)d(P)+(2j+1)d(Q),\\
-\frac{q^{2g+1}}{|P|^{2i+1/2}|Q|^{2j+1/2}} & \textrm{if } (2i+1)d(P)+(2j-1)d(Q)=2g,\\
\frac{q^{2g}}{|P|^{2i+1/2}|Q|^{2j+1/2}} & \textrm{if } (2i+1)d(P)+(2j+1)d(Q)=2g,\\
0 & \textrm{otherwise.}
\end{cases}
\end{align*}
Hence $I_{2,>}^{\textrm{oo}}(N;\alpha,\beta)$ is equal to, up to an error of size  $O(N^2q^{N/2-2g})$,
\begin{align*}
&-\sum_{\substack{d(P)>d(Q)\\d(P^{2k+1}Q^{2l+1})\, \textrm{even}\,\leq N}}\,\sum_{\substack{(2i+1)d(P)+(2j-1)d(Q)< 2g\\2g< (2i+1)d(P)+(2j+1)d(Q)}}\frac{d(P)d(Q)}{|P|^{(2k+1)(1/2+\alpha)+2i+1/2}|Q|^{(2l+1)(1/2+\beta)+2j+1/2}}\\
&\quad-\frac{q}{q-1}\sum_{\substack{d(P)>d(Q)\\d(P^{2k+1}Q^{2l+1})\, \textrm{even}\,\leq N}}\,\sum_{(2i+1)d(P)+(2j-1)d(Q)=2g}\frac{d(P)d(Q)}{|P|^{(2k+1)(1/2+\alpha)+2i+1/2}|Q|^{(2l+1)(1/2+\beta)+2j+1/2}}\\
&\quad+\frac{1}{q-1}\sum_{\substack{d(P)>d(Q)\\d(P^{2k+1}Q^{2l+1})\, \textrm{even}\,\leq N}}\,\sum_{(2i+1)d(P)+(2j+1)d(Q)=2g}\frac{d(P)d(Q)}{|P|^{(2k+1)(1/2+\alpha)+2i+1/2}|Q|^{(2l+1)(1/2+\beta)+2j+1/2}}.
\end{align*}

By the Prime Polynomial Theorem, the contribution of the terms with $d(P)=m$ and $d(Q)=n$ to $I_{2,>}^{\textrm{oo}}(N;\alpha,\beta)$ for each $i,j,k,l$ is bounded by
\begin{equation}\label{bound100}
\ll q^{-(k+2i)m-(l+2j)n}.
\end{equation}
Note that $(2i+1)m+(2j+1)n\geq2g$, so this is, in particular, bounded by $O( q^{-2g+(1-k)m+(1-l)n})$. It follows that the contribution of the terms with $m+n\leq g$ is $O(q^{-g})$. For those with $m+n>g$, the condition $m>n$ leads to $m>g/2$, and it follows from \eqref{bound100} that the contribution of such terms with $i+k\geq 1$ is $O(Nq^{-g/2})$. Hence we can restrict to the case $i=k=0$ and get
\begin{align*}
I_{2,>}^{\textrm{oo}}(N;\alpha,\beta)&=-\sum_{\substack{d(P)>d(Q)\\d(PQ^{2l+1})\, \textrm{even}\,\leq N}}\,\sum_{\substack{d(P)+(2j-1)d(Q)< 2g\\2g< d(P)+(2j+1)d(Q)}}\frac{d(P)d(Q)}{|P|^{1+\alpha}|Q|^{(2l+1)(1/2+\beta)+2j+1/2}}\\
&\qquad\qquad-\frac{q}{q-1}\sum_{\substack{d(P)>d(Q)\\d(PQ^{2l+1})\leq N}}\,\sum_{d(P)+(2j-1)d(Q)=2g}\frac{d(P)d(Q)}{|P|^{1+\alpha}|Q|^{(2l+1)(1/2+\beta)+2j+1/2}}\\
&\qquad\qquad+\frac{1}{q-1}\sum_{\substack{d(P)>d(Q)\\d(PQ^{2l+1})\leq N}}\,\sum_{d(P)+(2j+1)d(Q)=2g}\frac{d(P)d(Q)}{|P|^{1+\alpha}|Q|^{(2l+1)(1/2+\beta)+2j+1/2}}\\
&\qquad\qquad+O(N^2q^{N/2-2g})+O(Nq^{-g/2}).
\end{align*}
\kommentar{\acom{In the last two terms we need $d(PQ^{2l+1})$ to be even.}\Hung{$d(P)+(2j\pm1)d(Q)=2g$, and hence $d(PQ^{2l+1})$ even.}}

We shall write
\[
I_{2,>}^{\textrm{oo}}(N;\alpha,\beta)=I_{2,>}^{\textrm{oo}\flat}(N;\alpha,\beta)+I_{2,>}^{\textrm{oo}\dagger}(N;\alpha,\beta)+O(N^2q^{N/2-2g})+O(Nq^{-g/2})
\]
to separate the cases $j+l\geq 1$ and $j=l=0$, respectively. For $I_{2,>}^{\textrm{oo}\flat}(\alpha,\beta)$, by the Perron formula we have
\[
I_{2,>}^{\textrm{oo}\flat}(N;\alpha,\beta)=\frac{1}{2\pi i}\oint_{|u|=r}J_{2,>}^{\textrm{oo}\flat}(u,\alpha,\beta)\,\frac{du}{u^{N+1}(1-u)}
\]
for any $r<q^{-\varepsilon}$, where
\begin{align*}
J_{2,>}^{\textrm{oo}\flat}(u,\alpha,\beta)=&-\sum_{l+j\geq1}\,\sum_{\substack{d(P)>d(Q)\\d(PQ)\, \textrm{even}\\d(P)+(2j-1)d(Q)< 2g\\2g< d(P)+(2j+1)d(Q)}}\frac{d(P)d(Q)u^{d(P)+(2l+1)d(Q)}}{|P|^{1+\alpha}|Q|^{(2l+1)(1/2+\beta)+2j+1/2}}\\
&\qquad\qquad-\frac{q}{q-1}\,\sum_{l+j\geq1}\,\sum_{\substack{d(P)>d(Q)\\d(P)+(2j-1)d(Q)=2g}}\frac{d(P)d(Q)u^{d(P)+(2l+1)d(Q)}}{|P|^{1+\alpha}|Q|^{(2l+1)(1/2+\beta)+2j+1/2}}\\
&\qquad\qquad+\frac{1}{q-1}\,\sum_{l+j\geq1}\,\sum_{\substack{d(P)>d(Q)\\d(P)+(2j+1)d(Q)=2g}}\frac{d(P)d(Q)u^{d(P)+(2l+1)d(Q)}}{|P|^{1+\alpha}|Q|^{(2l+1)(1/2+\beta)+2j+1/2}}.
\end{align*}
Given $Q$, from the Prime Polynomial Theorem we have
\begin{align*}
&\sum_{\substack{dP)>d(Q)\\d(PQ)\, \textrm{even}\\d(P)+(2j-1)d(Q)< 2g\\2g< d(P)+(2j+1)d(Q)}}\frac{d(P)u^{d(PQ)}}{|P|^{1+\alpha}}=\sum_{\max\{d(Q),g-jd(Q)\}<n<g-(j-1)d(Q)}\frac{|Q|^\alpha u^{2n}}{q^{2n\alpha}}\big(1+O(q^{-n}|Q|^{1/2})\big)\\
&=\begin{cases}-q^{-2g\alpha}u^{2g-2jd(Q)}|Q|^{(2j+1)\alpha}\frac{u^{2}}{u^2-q^{2\alpha}}+q^{-2g\alpha}u^{2g-2(j-1)d(Q)}|Q|^{(2j-1)\alpha}\frac{q^{2\alpha}}{u^2-q^{2\alpha}}+O(q^{-g}|Q|^{j+1/2})\\
\qquad\qquad\qquad\qquad\qquad\qquad\qquad\qquad\qquad\qquad\qquad\qquad\text{if }(j+1)d(Q)<g,\\
-u^{2d(Q)}|Q|^{-\alpha}\frac{u^{2}}{u^2-q^{2\alpha}}+q^{-2g\alpha}u^{2g-2(j-1)d(Q)}|Q|^{(2j-1)\alpha}\frac{q^{2\alpha}}{u^2-q^{2\alpha}}+O(|Q|^{-1/2})\\
\qquad\qquad\qquad\qquad\qquad\qquad\qquad\qquad\qquad\qquad\qquad\qquad\text{if }jd(Q)< g\leq (j+1)d(Q).
\end{cases}
\end{align*}
\kommentar{\acom{For the error term I get that we have an extra $|Q|^{2rj}$, and since $r = \alpha-\log_q u$ if $\alpha$ is for example close to $1/4$ then the error term is much bigger.}\Hung{Again, we shall assume $\alpha \ll 1/g$, and I think the above is fine}}
Hence
\begin{align*}
&J_{2,>}^{\textrm{oo}\flat}(u,\alpha,\beta)=\,q^{-2g\alpha}u^{2g}\bigg(\frac{u^{2}}{u^2-q^{2\alpha}}+\frac{1}{q-1}\bigg)\sum_{l+j\geq1}\,\sum_{(j+1)d(Q)<g}\frac{d(Q)u^{2(l-j)d(Q)}}{|Q|^{1-\alpha+\beta+l(1+2\beta)+2j(1-\alpha)}}\\
&\qquad+\frac{u^{2}}{u^2-q^{2\alpha}}\sum_{l+j\geq1}\,\sum_{jd(Q)< g\leq (j+1)d(Q)}\frac{d(Q)u^{2(l+1)d(Q)}}{|Q|^{1+\alpha+\beta+l(1+2\beta)+2j}}\\
&\qquad-q^{-2g\alpha}u^{2g}\bigg(\frac{q^{2\alpha}}{u^2-q^{2\alpha}}+\frac{q}{q-1}\bigg)\sum_{l+j\geq1}\,\sum_{jd(Q)<g}\frac{d(Q)u^{2(l-j+1)d(Q)}}{|Q|^{1+\alpha+\beta+l(1+2\beta)+2j(1-\alpha)}}+O(q^{-g/2}).
\end{align*}

By the Prime Polynomial Theorem again, it is easy to see that the second expression is bounded by $O(q^{-g})$. Also, we can extend the sum over $Q$ in the third expression to all of $Q\in\mathcal{P}$ at the cost of an error of size $O_\varepsilon(q^{-2g+\varepsilon g})$. For the first expression, we write
\begin{align*}
&\sum_{l+j\geq1}\,\sum_{(j+1)d(Q)<g}\frac{d(Q)}{|Q|^{1-\alpha+\beta}}\,x^ly^j = \sum_{d(Q)<g}\frac{d(Q)}{|Q|^{1-\alpha+\beta}}\sum_{\substack{l+j\geq1\\j< g/d(Q)-1}}x^ly^j\\
&\qquad\qquad=\sum_{d(Q)<g}\frac{d(Q)}{|Q|^{1-\alpha+\beta}}\sum_{l+j\geq1}x^ly^j+O_\varepsilon(q^{-g+\varepsilon g})\\
&\qquad\qquad=\sum_{d(Q)<g}\frac{d(Q)}{|Q|^{1-\alpha+\beta}}\,\bigg(\frac{x}{1-x}+\frac{y}{(1-x)(1-y)}\bigg)+O_\varepsilon(q^{-g+\varepsilon g}).
\end{align*}
\kommentar{\acom{I'm not sure that this is correct. For example if $j=1$ and $l=0$ I get $q^{-2g/3} q^{(\alpha-\beta)g/3}$ and again if $\alpha \sim 1/4$ the above can be bigger. Maybe you're assuming everywhere that $\Re(\alpha), \Re(\beta) \ll 1/g$?}\Hung{Same comment as above}}
The identities in \eqref{A22} and an argument similar to that used in the evaluation of $J_{2}^{\textrm{oe}}(u,\alpha,\beta)$ in equation \eqref{factno1} then imply that
\begin{align}\label{factno2}
&J_{2,>}^{\textrm{oo}\flat}(u,\alpha,\beta)=\,q^{-2g\alpha}u^{2g}\mathcal{A}_2(u,\alpha)\bigg(\sum_{d(Q)<g}\frac{d(Q)u^{2d(Q)}}{|Q|^{1-\alpha+\beta}(|Q|^{1+2\beta}-u^{2d(Q)})}\nonumber\\
&\qquad+\sum_{d(Q)<g}\frac{d(Q)|Q|^{\alpha+\beta}}{(|Q|^{2(1-\alpha)}u^{2d(Q)}-1)(|Q|^{1+2\beta}-u^{2d(Q)})}\\
&\qquad-\sum_{d(Q)<g}\frac{d(Q)|Q|^{2-3\alpha+\beta}u^{4d(Q)}}{(|Q|^{2(1-\alpha)}u^{2d(Q)}-1)(|Q|^{1+2\beta}-u^{2d(Q)})}+\sum_{d(Q)<g}\frac{d(Q)u^{2d(Q)}}{|Q|^{1+\alpha+\beta}} \bigg)+O(q^{-g/2}).\nonumber
\end{align}

For $I_{2,>}^{\textrm{oo}\dagger}(N;\alpha,\beta)$, by the Perron formula we have
\[
I_{2,>}^{\textrm{oo}\dagger}(N;\alpha,\beta)=\frac{1}{2\pi i}\oint_{|u|=r}J_{2,>}^{\textrm{oo}\dagger}(u,\alpha,\beta)\,\frac{du}{u^{N+1}(1-u)} \label{idag}
\]
for any $r<q^{-\varepsilon}$, where 
\begin{align*}
J_{2,>}^{\textrm{oo}\dagger}(u,\alpha,\beta)&=-\sum_{\substack{d(P)>d(Q)\\d(PQ)\, \textrm{even}}}\,\sum_{\substack{d(P)-d(Q)< 2g\\2g< d(P)+d(Q)}}\frac{d(P)d(Q)u^{d(PQ)}}{|P|^{1+\alpha}|Q|^{1+\beta}}-\frac{q}{q-1}\sum_{d(P)-d(Q)=2g}\frac{d(P)d(Q)u^{d(PQ)}}{|P|^{1+\alpha}|Q|^{1+\beta}}\\
&\qquad\qquad+\frac{1}{q-1}\sum_{d(P)>d(Q)}\,\sum_{d(P)+d(Q)=2g}\frac{d(P)d(Q)u^{d(PQ)}}{|P|^{1+\alpha}|Q|^{1+\beta}}.
\end{align*}
The last two terms can be evaluated using the Prime Polynomial Theorem. Concerning the first term, note that given $Q$,
\begin{align*}
&\sum_{\substack{d(P)>d(Q)\\d(PQ)\, \textrm{even}}}\,\sum_{\substack{d(P)-d(Q)< 2g\\2g< d(P)+d(Q)}}\frac{d(P)u^{d(PQ)}}{|P|^{1+\alpha}}=\sum_{\max\{d(Q),g\}<n<g+d(Q)}\frac{|Q|^\alpha u^{2n}}{q^{2n\alpha}}\big(1+O(q^{-n}|Q|^{1/2})\big)\\
&\qquad\qquad=\begin{cases}
-q^{-2g\alpha}u^{2g}|Q|^{\alpha}\frac{u^{2}}{u^2-q^{2\alpha}}+q^{-2g\alpha}u^{2g+2d(Q)}|Q|^{-\alpha}\frac{q^{2\alpha}}{u^2-q^{2\alpha}}+O(q^{-g/2}) & \textrm{if }d(Q)< g,\\
-|Q|^{-\alpha}u^{2d(Q)}\frac{u^{2}}{u^2-q^{2\alpha}}+q^{-2g\alpha}u^{2g+2d(Q)}|Q|^{-\alpha}\frac{q^{2\alpha}}{u^2-q^{2\alpha}}+O(q^{-g/2}) & \textrm{if }d(Q)\geq g,
\end{cases}
\end{align*}
by writing $d(PQ)=2n$.
\kommentar{\acom{I agree with the answer, but I find the way it is written confusing. I'd write
\begin{align*}
\sum_{\substack{d(P)>d(Q)\\d(PQ)\, \textrm{even}}}\,\sum_{\substack{d(P)-d(Q)< 2g\\2g< d(P)+d(Q)}}\frac{d(P)}{|P|^{1+r}} = \sum_{\substack{\max\{d(Q),2g-d(Q)\}<n<2g+d(Q) \\ n \equiv d(Q) \pmod 2}} \Big(q^{-nr} + O(q^{-n/2-nr}) \Big).
\end{align*}

}\Hung{I try to keep it consistent with the way I wrote before, but either way is fine}}
So
\begin{align*}
&-\sum_{\substack{d(P)>d(Q)\\d(PQ)\, \textrm{even}}}\,\sum_{\substack{d(P)-d(Q)< 2g\\2g< d(P)+d(Q)}}\frac{d(P)d(Q)u^{d(PQ)}}{|P|^{1+\alpha}|Q|^{1+\beta}}=q^{-2g\alpha}u^{2g}\frac{u^{2}}{u^2-q^{2\alpha}}\sum_{d(Q)< g}\frac{d(Q)}{|Q|^{1-\alpha+\beta}}\\
&\qquad\qquad +\frac{u^{2}}{u^2-q^{2\alpha}}\sum_{d(Q)\geq g}\frac{d(Q)u^{2d(Q)}}{|Q|^{1+\alpha+\beta}}-q^{-2g\alpha}u^{2g}\frac{q^{2\alpha}}{u^2-q^{2\alpha}}\sum_{Q\in\mathcal{P}}\frac{d(Q)u^{2d(Q)}}{|Q|^{1+\alpha+\beta}}+O(q^{-g/2}).
\end{align*}
Hence, using \eqref{A22}, we have
\begin{align*}
J_{2,>}^{\textrm{oo}\dagger}(u,\alpha,\beta)&=q^{-2g\alpha}u^{2g}\bigg(\frac{u^{2}}{u^2-q^{2\alpha}}+\frac{1}{q-1}\bigg)\sum_{d(Q)< g}\frac{d(Q)}{|Q|^{1-\alpha+\beta}}+\frac{u^{2}}{u^2-q^{2\alpha}}\sum_{d(Q)\geq g}\frac{d(Q)u^{2d(Q)}}{|Q|^{1+\alpha+\beta}}\nonumber\\
&\qquad\qquad-q^{-2g\alpha}u^{2g}\bigg(\frac{q^{2\alpha}}{u^2-q^{2\alpha}}+\frac{q}{q-1}\bigg)\sum_{Q\in\mathcal{P}}\frac{d(Q)u^{2d(Q)}}{|Q|^{1+\alpha+\beta}}+O(q^{-g/2})\nonumber\\
&=q^{-2g\alpha}u^{2g}\mathcal{A}_2(u,\alpha)\bigg(\sum_{d(Q)< g}\frac{d(Q)}{|Q|^{1-\alpha+\beta}}-\sum_{d(Q)<g}\frac{d(Q)u^{2d(Q)}}{|Q|^{1+\alpha+\beta}}\bigg)\\
&\qquad\qquad+\frac{u^{2}}{u^2-q^{2\alpha}}\sum_{d(Q)\geq g}\frac{d(Q)u^{2d(Q)}}{|Q|^{1+\alpha+\beta}}+O(q^{-g/2}),\nonumber
\end{align*}
where in the second identity we truncated the second sum over $Q$ using a similar argument as before.
For the third term, from the Prime Polynomial Theorem we have
\[
\sum_{d(Q)\geq g}\frac{d(Q)u^{2d(Q)}}{|Q|^{1+\alpha+\beta}}=\sum_{n\geq g}\frac{u^{2n}}{q^{n(\alpha+\beta)}}\big(1+O(q^{-n/2})\big)=-q^{-g(\alpha+\beta)}u^{2g}\frac{q^{\alpha+\beta}}{u^2-q^{\alpha+\beta}}+O(q^{-g/2}).
\]
\kommentar{\acom{Again, we need $\Re(\alpha) \geq 0$ for the error term.}\Hung{I rewrite things a bit differently}}
Thus,
\begin{align}
J_{2,>}^{\textrm{oo}\dagger}(u,\alpha,\beta)&=q^{-2g\alpha}u^{2g}\mathcal{A}_2(u,\alpha)\bigg(\sum_{d(Q)< g}\frac{d(Q)}{|Q|^{1-\alpha+\beta}}-\sum_{d(Q)<g}\frac{d(Q)u^{2d(Q)}}{|Q|^{1+\alpha+\beta}}\bigg)\nonumber \\
&\qquad\qquad-q^{-g(\alpha+\beta)}u^{2g}\frac{q^{\alpha+\beta}u^2}{(u^2-q^{2\alpha})(u^2-q^{\alpha+\beta})}+O(q^{-g/2}). \nonumber \\
 \label{factno3}
\end{align}

Combining \eqref{factno2} and \eqref{factno3} we obtain
\begin{align*}
I_{2,>}^{\textrm{oo}}(N;\alpha,\beta)=\frac{1}{2\pi i}\oint_{|u|=r}J_{2,>}^{\textrm{oo}}(u,\alpha,\beta)\,\frac{du}{u^{N+1}(1-u)}+O(N^2q^{N/2-2g})+O(Nq^{-g/2}),
\end{align*}
where
\begin{align}\label{factno4}
 J_{2,>}^{\textrm{oo}}(u,\alpha,\beta)&= q^{-2g\alpha}u^{2g}\mathcal{A}_2(u,\alpha)\bigg(\sum_{d(Q)<g}\frac{d(Q)u^{2d(Q)}}{|Q|^{1-\alpha+\beta}(|Q|^{1+2\beta}-u^{2d(Q)})}\nonumber\\
&\qquad\qquad+\sum_{d(q)<g}\frac{d(Q)(|Q|^{\alpha+\beta}-|Q|^{2-3\alpha+\beta}u^{4d(Q)})}{(|Q|^{2(1-\alpha)}u^{2d(Q)}-1)(|Q|^{1+2\beta}-1)}+ \sum_{d(Q)<g} \frac{1}{|Q|^{1-\alpha+\beta}} \bigg) \\
 &\qquad\qquad -q^{-g(\alpha+\beta)}u^{2g}\frac{q^{\alpha+\beta}u^2}{(u^2-q^{2\alpha})(u^2-q^{\alpha+\beta})} +O(q^{-g/2}).\nonumber
\end{align}
\kommentar{\begin{align}\label{factno4}
J_{2,>}^{\textrm{oo}}(u,\alpha,\beta)&=-A_2(-r,s,r,s)q^{-2gr}\zeta_q(1-2r)\sum_{Q\in\mathcal{P}}\frac{d(Q)(|Q|^{2-3r+s}-|Q|^{2-r+s})}{(|Q|^{2(1-r)}-1)(|Q|^{1+2s}-1)}\\
&\qquad\qquad-\frac{q^{-g(r+s)}}{1-q^{r-s}}\bigg(\frac{1}{1-q^{r+s}}+\frac{1}{q-1}\bigg).\nonumber
\end{align}
}

Now consider $I_{2,=}^{\textrm{oo}}(N;\alpha,\beta)$. As before we will have $(V,PQ)=1$ automatically in \eqref{500}. So
\begin{align*}
\eqref{500}&=\frac{q^{2g+1}}{|P|^{2i+2j+1}}\bigg(q\sum_{V \in \mathcal{M}_{\leq (i+j+1)d(P)-g-2}}1- \sum_{V \in \mathcal{M}_{\leq (i+j+1)d(P)-g-1}} 1\bigg)\\
&\qquad\qquad-\frac{q^{2g}}{|P|^{2i+2j+1}}\bigg(q\sum_{V \in \mathcal{M}_{\leq (i+j+1)d(P)-g-1}}1- \sum_{V \in \mathcal{M}_{\leq (i+j+1)d(P)-g}} 1 \bigg)\\
&=\begin{cases}
-\frac{q^{2g}(q-1)}{|P|^{2i+2j+1}} & \textrm{if } (i+j+1)d(P)>g,\\
\frac{q^{2g}}{|P|^{2i+2j+1}} & \textrm{if } (i+j+1)d(P)=g,\\
0 & \textrm{otherwise.}
\end{cases}
\end{align*}
Hence
\begin{align*}
&I_{2,=}^{\textrm{oo}}(N;\alpha,\beta)=-\sum_{\substack{P\ne Q\\d(P^{2k+2l+2})\leq N\\d(P)=d(Q)>g/(i+j+1)}}\frac{d(P)^2}{|P|^{(2k+1)(1/2+\alpha)+(2l+1)(1/2+\beta)+2i+2j+1}}\\
&\qquad\qquad+\frac{1}{q-1}\sum_{\substack{P\ne Q\\d(P^{2k+2l+2})\leq N\\d(P)=d(Q)=g/(i+j+1)}}\frac{d(P)^2}{|P|^{(2k+1)(1/2+\alpha)+(2l+1)(1/2+\beta)+2i+2j+1}}+O(N^2 q^{N/2-2g}).
\end{align*}
The same argument as before shows that the contribution of the term with $i+j+k+l\geq 1$ is $O(Nq^{-g})$. For $i=j=k=l=0$, we can ignore the condition $P\ne Q$ at the cost of $O(gq^{-g})$. So using the Perron formula we obtain that
\[
I_{2,=}^{\textrm{oo}}(N;\alpha,\beta)=\frac{1}{2\pi i}\oint_{|u|=r}J_{2,=}^{\textrm{oo}}(u,\alpha,\beta)\,\frac{du}{u^{N+1}(1-u)}+O(N^2 q^{N/2-2g})+O(gq^{-g})
\]
for any $r<q^{-\varepsilon}$, where
\begin{align*}
J_{2,=}^{\textrm{oo}}(u,\alpha,\beta)&=-\sum_{d(P)=d(Q)>g}\frac{d(P)^2u^{2d(P)}}{|P|^{2+\alpha+\beta}}+\frac{1}{q-1}\sum_{d(P)=d(Q)=g}\frac{d(P)^2u^{2d(P)}}{|P|^{2+\alpha+\beta}}.
\end{align*}

From the Prime Polynomial Theorem we get
\begin{align}\label{fact3}
J_{2,=}^{\textrm{oo}}(u,\alpha,\beta)&=-\sum_{n>g} \frac{u^{2n}}{q^{n(\alpha+\beta)}}+\frac{q^{-g(\alpha+\beta)}u^{2g}}{q-1}+O(q^{-g/2}) \nonumber\\
&=q^{-g(\alpha+\beta)}u^{2g}\bigg(\frac{u^2}{u^2-q^{\alpha+\beta}}+\frac{1}{q-1}\bigg)+O(q^{-g/2}).
\end{align}

\subsection{Combining Type-I terms}
\label{combine}
In view of \eqref{r2}, \eqref{t2}, \eqref{factno1} and \eqref{factno4} we obtain 
\begin{align*}
I_{2}^{\textrm{oe}}(N;\alpha,\beta)+ I_{2,>}^{\textrm{oo}}(N;\alpha,\beta)&=\frac{1}{2\pi i}\oint_{|u|=r}J_2(u,\alpha,\beta)\,\frac{du}{u^{N+1}(1-u)}+O(N^2q^{N/2-2g})+O(Nq^{-g/2}),
\end{align*}
where
\begin{align*}
J_2(u,\alpha,\beta)&=R_2(u,\alpha,\beta)- \frac{q^{-g(\alpha+\beta)}u^{2g}\mathcal{A}_2(u,\alpha)}{1-q^{\alpha-\beta}}-q^{-g(\alpha+\beta)}u^{2g}\frac{q^{\alpha+\beta}u^2}{(u^2-q^{2\alpha})(u^2-q^{\alpha+\beta})}+ O(q^{-g/2})\\
&=R_2(u,\alpha,\beta)- \frac{q^{-g(\alpha+\beta)}u^{2g}}{(1-q^{\alpha-\beta})(q-1)}-\frac{q^{-g(\alpha+\beta)}u^{2g}}{1-q^{\alpha-\beta}}\frac{u^2}{u^2-q^{\alpha+\beta}}+ O(q^{-g/2}).
\end{align*}
\kommentar{\acom{We get that
\begin{align*}
J_2(u,\alpha,\beta) &= q^{-2\alpha g} u^{2g} \mathcal{A}_2(u,\alpha) \Big(\sum_{d(Q)<g} \frac{d(Q)}{|Q|^{1-\alpha+\beta}} \\
&- \sum_{Q} \frac{d(Q)(|Q|^{3a+b}- |Q|^{-1+3a-b}u^{2d(Q)}-|Q|u^{4d(Q)}+|Q|^2u^{4d}+|Q|^{1+a-b}u^{4d}-|Q|^{2-a+b}u^{4d}}{(u^{2d}-|Q|^{1+2b})(-|Q|^{2a}+|Q|^2 u^{2d})} \Big) \\
& -q^{-g(\alpha+\beta)}u^{2g}\frac{q^{\alpha+\beta}u^2}{(u^2-q^{2\alpha})(u^2-q^{\alpha+\beta})} \\
&= -q^{-2\alpha g} u^{2g} \mathcal{A}_2(u,\alpha) \sum_{Q} \frac{d(Q)(|Q|^{3a+b}- |Q|^{-1+3a-b}u^{2d(Q)}-|Q|u^{4d(Q)}+|Q|^2u^{4d}+|Q|^{1+a-b}u^{4d}-|Q|^{2-a+b}u^{4d}}{(u^{2d}-|Q|^{1+2b})(-|Q|^{2a}+|Q|^2 u^{2d})} \\
&- \frac{q^{-g(\alpha+\beta)}u^{2g} \mathcal{A}_2(u,\alpha)}{1-q^{\alpha-\beta}} - \frac{q^{-2\alpha g} u^{2g} \mathcal{A}_2(u,\alpha)}{1-q^{\beta-\alpha})}-q^{-g(\alpha+\beta)}u^{2g}\frac{q^{\alpha+\beta}u^2}{(u^2-q^{2\alpha})(u^2-q^{\alpha+\beta})}
\end{align*}}}
Similarly,
\begin{align*}
I_{2}^{\textrm{eo}}(N;\alpha,\beta)+ I_{2,<}^{\textrm{oo}}(N;\alpha,\beta)&=\frac{1}{2\pi i}\oint_{|u|=r}J_3(u,\alpha,\beta)\,\frac{du}{u^{N+1}(1-u)}+O(N^2q^{N/2-2g})+O(Nq^{-g/2}),
\end{align*}
where
\[
J_3(u,\alpha,\beta)=R_3(u,\alpha,\beta)- \frac{q^{-g(\alpha+\beta)}u^{2g}}{(1-q^{-\alpha+\beta})(q-1)}-\frac{q^{-g(\alpha+\beta)}u^{2g}}{1-q^{-\alpha+\beta}}\frac{u^2}{u^2-q^{\alpha+\beta}}+ O(q^{-g/2}).
\]

Now note that 
\[
\frac{1}{1-q^{\alpha-\beta}}+\frac{1}{1-q^{-\alpha+\beta}}=1,
\]
and hence, by using \eqref{fact3},
\begin{align*}
&I_{2}^{\textrm{oe}}(N;\alpha,\beta)+I_{2}^{\textrm{eo}}(N;\alpha,\beta)+ I_{2}^{\textrm{oo}}(N;\alpha,\beta)\\
&\qquad\qquad=\frac{1}{2\pi i}\oint_{|u|=r}\big(R_2(u,\alpha,\beta)+R_3(u,\alpha,\beta)\big)\,\frac{du}{u^{N+1}(1-u)}+O(N^2q^{N/2-2g})+O(Nq^{-g/2}).
\end{align*}




\begin{thebibliography}{99}
\bibitem{AK}\label{AK}
J. Andrade, J. P. Keating, \emph{Mean value theorems for {$L$}-functions
  over prime polynomials for the rational function field}, Acta Arith.
  \textbf{161} (2013), 371--385. 
  
\bibitem{BeKe}\label{BeKe}
M. V. Berry, J. P. Keating, {\it The Riemann zeros and eigenvalue asymptotics}, SIAM Rev. \textbf{41} (1999), 236--266. 

\bibitem{BK2}\label{BK2}
E. B.  Bogomolny, J. P. Keating, {\it Random matrix theory and the Riemann zeros I: 
three- and four-point correlations}, Nonlinearity \textbf{8} (1995), 1115--1131.


\bibitem{BK1}\label{BK1}
E. B. Bogomolny, J. P. Keating, {\it Gutzwiller's trace formula and
spectral statistics: beyond the diagonal approximation},  Phys. Rev.
Lett. {\bf 77} (1996), 1472--1475.

\bibitem{BK3}\label{BK3}
 E. B. Bogomolny, J. P. Keating, {\it Random matrix theory and the Riemann zeros II: 
$n$-point correlations}, Nonlinearity \textbf{9} (1996), 911--935.

\bibitem{BK4}\label{BK4}
E. B. Bogomolny, J. P. Keating, {\it Two-point correlation function for Dirichlet L-functions}, J. Phys. A \textbf{46} (2013), 095202.

\bibitem{BK5}\label{BK5}
E. B. Bogomolny, J. P. Keating, {\it A method for calculating spectral statistics based on random-matrix universality with an application to the three-point correlations of the Riemann zeros}, J. Phys. A \textbf{46} (2013), 305203.

\bibitem{bf}\label{bf}
H. M. Bui, A. Florea, \emph{Zeros of quadratic {D}irichlet
  {$L$}-functions in the hyperelliptic ensemble}, Trans. Amer. Math. Soc.
  \textbf{370} (2018), 8013--8045.

\bibitem{cfkrs}\label{cfkrs}
J. B. Conrey, D. W. Farmer, J. P. Keating, M. O. Rubinstein, N. C. Snaith,
  \emph{Integral moments of {$L$}-functions}, Proc. London Math. Soc. \textbf{91} (2005), 33--104.

\bibitem{cfz}\label{cfz}
J. B. Conrey, D. W. Farmer, M. R. Zirnbauer, \emph{Autocorrelation
  of ratios of {$L$}-functions}, Commun. Number Theory Phys. \textbf{2} (2008),
  593--636. 

\bibitem{ck1}\label{ck1}
J. B. Conrey, J. P. Keating, \emph{Moments of zeta and correlations of
  divisor-sums: \textrm{I}}, Philos. Trans. R. Soc. Lond. Ser. A Math. Phys.
  Eng. Sci. \textbf{373} (2015), .

\bibitem{ck2}\label{ck2}
J. B. Conrey, J. P. Keating, \emph{Moments of zeta and correlations of divisor-sums: \textrm{II}},
  Advances in the Theory of Numbers. Fields Institute Communications
  \textbf{77} (2015).

\bibitem{ck3}\label{ck3}
J. B. Conrey, J. P. Keating,  \emph{Moments of zeta and correlations of divisor-sums: \textrm{III}},
  Indag. Math. \textbf{26} (2015), 736--747.

\bibitem{ck4}\label{ck4}
J. B. Conrey, J. P. Keating,  \emph{Moments of zeta and correlations of divisor-sums: \textrm{IV}},
  Res. Number Theory \textbf{1} (2016), 1--24.
  
  
\bibitem{ck6}\label{ck6} 
J. B. Conrey, J. P. Keating,  {\it Pair correlation and twin primes revisited}, Proc. R. Soc. A \textbf{472} (2016), 20160548.

\bibitem{ck7}\label{ck7}
J. B. Conrey, J. P. Keating,  {\it Averages of ratios of the Riemann zeta-function and correlations of divisor sums}, Nonlinearity \textbf{30}  (2017), 67--80.


\bibitem{ck5}\label{ck5}
J. B. Conrey, J. P. Keating, \emph{Moments of zeta and correlations of
  divisor-sums: {V}}, Proc. Lond. Math. Soc. \textbf{118} (2019), 729--752. 

\bibitem{CS}\label{CS}
J. B. Conrey, N. C. Snaith, \emph{Applications of the {$L$}-functions ratios
  conjectures}, Proc. Lond. Math. Soc. \textbf{94} (2007), 594--646.

\bibitem{CS2}\label{CS2}
J. B. Conrey, N. C. Snaith, \emph{In support of $n$-correlation}, Comm. Math. Phys.  \textbf{330} (2014), 639--653.

\bibitem{entin}\label{entin}
A. Entin, E. Roditty-Gershon, Z. Rudnick, \emph{Low-lying zeros
  of quadratic {D}irichlet {L}-functions, hyper-elliptic curves and random
  matrix theory}, Geom. Funct. Anal. \textbf{23} (2013), 1230--1261.

\bibitem{aflorea}\label{aflorea}
A. Florea, \emph{Improving the error term in the mean value of
  {$L(\frac12,\chi)$} in the hyperelliptic ensemble}, Int. Math. Res. Not. IMRN
 (2017), no.~20, 6119--6148.

\bibitem{gao}\label{gao}
P. Gao, \emph{{$n$}-level density of the low-lying zeros of quadratic
  {D}irichlet {$L$}-functions}, Int. Math. Res. Not. IMRN (2014), no.~6,
  1699--1728. 

\bibitem{hayes}\label{hayes}
D. R. Hayes, \emph{The expression of a polynomial as a sum of three
  irreducibles}, Acta Arith. \textbf{11} (1966), 461--488. 

\bibitem{miller2}\label{miller2}
C. P. Hughes, S. J. Miller, \emph{Low-lying zeros of {$L$}-functions
  with orthogonal symmetry}, Duke Math. J. \textbf{136} (2007), 
  115--172.

\bibitem{hughesrudnick}\label{hughesrudnick}
C. P. Hughes, Z. Rudnick, \emph{Linear statistics of low-lying zeros of
  {$L$}-functions}, Q. J. Math. \textbf{54} (2003), 309--333.

\bibitem{ILS}\label{ILS}
H. Iwaniec, W. Luo,  P. Sarnak, \emph{Low lying zeros of families
  of {$L$}-functions}, Inst. Hautes \'{E}tudes Sci. Publ. Math. \textbf{91}
  (2001), 55--131.

\bibitem{katzsarnak}\label{katzsarnak}
N. M. Katz, P. Sarnak, \emph{Random matrices, {F}robenius
  eigenvalues, and monodromy}, American Mathematical Society Colloquium
  Publications, vol.~45, American Mathematical Society, Providence, RI, 1999.

\bibitem{katzsarnak2}\label{katzsarnak2}
N. M. Katz, P. Sarnak, \emph{Zeroes of zeta functions and symmetry}, Bull. Amer. Math. Soc.
  (N.S.) \textbf{36} (1999), 1--26.

\bibitem{ks2}\label{ks2}
J. P. Keating, N. C. Snaith, \emph{Random matrix theory and {$L$}-functions
  at {$s=1/2$}}, Comm. Math. Phys. \textbf{214} (2000), 91--110.

\bibitem{ksnaith}\label{ksnaith}
J. P. Keating, N. C. Snaith, \emph{Random matrix theory and {$\zeta(1/2+it)$}}, Comm. Math. Phys.
  \textbf{214} (2000), 57--89.

\bibitem{MS}\label{MS}
A. M. Mason, N. C. Snaith, \emph{Orthogonal and symplectic $n$-level densities}, Mem. Amer. Math. Soc. \textbf{251} (2018), no. 1194.

\bibitem{miller}\label{miller}
S. J. Miller, \emph{One- and two-level densities for rational families of
  elliptic curves: evidence for the underlying group symmetries}, Compos. Math.
  \textbf{140} (2004), 952--992.

\bibitem{ozluk}\label{ozluk}
A. E. \"{O}zl\"{u}k, C.~Snyder, \emph{On the distribution of the nontrivial
  zeros of quadratic {$L$}-functions close to the real axis}, Acta Arith.
  \textbf{91} (1999), 209--228. 

\bibitem{R}\label{R}
Z. Rudnick, \emph{Traces of high powers of the {F}robenius class in the
  hyperelliptic ensemble}, Acta Arith. \textbf{143} (2010), 81--99.

\bibitem{rubinstein}\label{rubinstein}
M. O. Rubinstein, K. Wu, \emph{Moments of zeta functions associated
  to hyperelliptic curves over finite fields}, Philos. Trans. Roy. Soc. A
  \textbf{373} (2015), 20140307.
\end{thebibliography}
\end{document}